\newcommand\C{{\mathbb C}}
\newcommand\Q{{\mathbb Q}}
\newcommand\R{{\mathbb R}}
\newcommand\N{{\mathbb N}}
\newcommand\Z{{\mathbb Z}}
\newcommand\gal{\mathrm{Gal}}
\newcommand\Res{\mathrm{Res}}
\newcommand\al{\alpha}
\newcommand\be{\beta}
\newcommand\ga{\gamma}
\newcommand\eps{\varepsilon}
\newtheorem{theorem}{Theorem}[section]
\newtheorem{lemma}[theorem]{Lemma}
\newtheorem{corollary}[theorem]{Corollary}
\theoremstyle{remark}
\numberwithin{equation}{section}
\begin{document}

\title[Conjugates of a Pisot number]{There are no two non-real conjugates of a Pisot number with the same imaginary part}


\author{Art\= uras Dubickas}
\address{Department of Mathematics and Informatics, Vilnius University, Naugarduko 24,
LT-03225 Vilnius, Lithuania}
\email{arturas.dubickas@mif.vu.lt}

\author{Kevin G. Hare}
\address{Department of Pure Mathematics, University of Waterloo, Waterloo, Ontario, Canada N2L 3G1}
\email{kghare@uwaterloo.ca}

\author{Jonas Jankauskas}
\address{Department of Pure Mathematics, University of Waterloo, Waterloo, Ontario, Canada N2L 3G1  and Department of Mathematics and Informatics, Vilnius University, Naugarduko 24,
LT-03225 Vilnius, Lithuania}
\email{jonas.jankauskas@gmail.com}


\thanks{
The research of A. Dubickas and J. Jankauskas was supported in part by the Research Council
of Lithuania Grant MIP-068/2013/LSS-110000-740. The
Research of K. G. Hare was supported in part by NSERC of Canada.}
\thanks{Computational support provided in part by the Canadian Foundation 
        for Innovation, and the Ontario Research Fund.}


\subjclass[2010]{11R06, 11R09}



\keywords{Pisot numbers, Mahler's measure, additive relations}

\begin{abstract}
We show that the number $\al=(1+\sqrt{3+2\sqrt{5}})/2$ with minimal polynomial $x^4-2x^3+x-1$ is the only Pisot number whose four distinct conjugates $\al_1,\al_2,\al_3,\al_4$ satisfy the additive relation $\al_1+\al_2=\al_3+\al_4$. This implies that there exists no two non-real conjugates of a Pisot number with the same
imaginary part and also that at most two conjugates of a Pisot number can have the
same real part. On the other hand, we prove that similar four term equations $\al_1 = \al_2 + \al_3+\al_4$ or $\al_1 + \al_2 + \al_3 + \al_4 =0$ cannot be solved in conjugates of a Pisot number $\al$. We also show that the roots of the Siegel's polynomial $x^3-x-1$ are the only solutions to the three term equation $\al_1+\al_2+\al_3=0$ in conjugates of a Pisot number. Finally, we prove that there exists no Pisot number whose conjugates satisfy the relation $\al_1=\al_2+\al_3$.\end{abstract}

\maketitle



\maketitle

\section{Introduction}\label{intro}

Recall that a
{\it Pisot number} $\alpha>1$ is  an algebraic integer 
whose conjugates over $\Q$, except for $\alpha$ itself, all lie in the
unit disc $|z|<1.$  In 1944, Salem \cite{salem} proved that the set of Pisot numbers is closed, whereas Siegel \cite{sieg} showed that the positive root $\theta=1.32471\dots$ of $x^3-x-1=0$ is the smallest Pisot number, so the number $\theta$ is often called \emph{Siegel's number}. Sometimes it is also called \emph{the plastic number} \cite{pad}. It is worth mentioning that, by the theorem of Smyth \cite{smy0}, Siegel's number has the smallest possible Mahler measure among all non-reciprocal algebraic numbers. For more  information on Pisot and Salem numbers see the book \cite{bert} and more recent surveys \cite{smy3, smy4}.

By a result of Smyth \cite{smy1}, at most two conjugates of a
Pisot number can have the same modulus. Later,
Mignotte \cite{mig} generalized this by proving that there are no non-trivial
multiplicative relations between the conjugates of a Pisot number, namely,

\begin{lemma}[Mignotte]\label{mignot}
The equality $
\al_1^{k_1} \al_2^{k_2} \dots \al_d^{k_d}=1$
with algebraic numbers $\al_1,\al_2,\dots,\al_d$ that are conjugates of a Pisot number $\al$ of degree 
$d$ over $\Q$ and $k_1,k_2,\dots,k_d \in \Z$ can only hold if $k_1=k_2=\dots=k_d$.
\end{lemma}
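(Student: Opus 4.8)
The plan is to exploit the defining feature of a Pisot number: among its $d$ conjugates exactly one, $\al$ itself, lies outside the closed unit disc, so that in a multiplicative relation the factor carrying the largest exponent cannot be absorbed by the small conjugates. Normalise so that $\al_1=\al>1$ and $|\al_j|<1$ for $2\le j\le d$, and suppose $\al_1^{k_1}\cdots\al_d^{k_d}=1$ with the $k_i$ not all $0$ (otherwise there is nothing to prove). Replacing the relation by its reciprocal $\al_1^{-k_1}\cdots\al_d^{-k_d}=1$ if necessary, I may assume $\bar k:=\max_i k_i>0$. Let $L$ be the splitting field over $\Q$ of the minimal polynomial $f$ of $\al$, and $G=\Gal(L/\Q)$; since $f$ is irreducible and separable, $G$ acts transitively on the full set $\{\al_1,\dots,\al_d\}$ of its roots. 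First I would choose an index $i_0$ with $k_{i_0}=\bar k$ together with $\sigma\in G$ such that $\sigma(\al_{i_0})=\al_1$, and apply $\sigma$ to the relation: this produces a new relation $\al_1^{\bar k}\prod_{j=2}^d\al_j^{e_j}=1$ in which $(\bar k,e_2,\dots,e_d)$ is a permutation of $(k_1,\dots,k_d)$, so in particular $e_j\le\bar k$ for all $j$.

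Next I would run an archimedean size estimate on this rearranged relation. Taking absolute values and logarithms, and setting $A=\log\al_1>0$ and $B_j=\log(1/|\al_j|)>0$ for $2\le j\le d$, one gets $\bar k\,A=\sum_{j=2}^d e_jB_j\le\bar k\sum_{j=2}^d B_j$, using $e_j\le\bar k$ and $B_j>0$. Now $\al_1\cdots\al_d=\pm f(0)$ is a nonzero rational integer $c$ with $|c|\ge1$, so $\prod_{j=2}^d|\al_j|=|c|/\al_1$ and hence $\sum_{j=2}^d B_j=A-\log|c|\le A$. Combining, $\bar kA\le\bar k(A-\log|c|)$, which forces $\bar k\log|c|\le0$, and since $\bar k>0$ this gives $|c|=1$. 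Then $\sum_{j\ge2}B_j=A$, so the displayed inequality $\sum e_jB_j\le\bar k\sum B_j$ is in fact an equality; as every $B_j$ is strictly positive (each non-dominant conjugate has modulus strictly less than $1$), this is possible only if $e_j=\bar k$ for every $j\ge2$. Thus all $d$ exponents in the rearranged relation equal $\bar k$, and being a permutation of $k_1,\dots,k_d$ they force $k_1=\dots=k_d$.

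The only genuinely substantive ingredient is the bound $\sum_{j\ge2}\log(1/|\al_j|)\le\log\al$, equivalently $|N_{\Q(\al)/\Q}(\al)|\ge1$: this is exactly where the hypothesis that $\al$ is an algebraic \emph{integer} (not merely an algebraic number with a single large conjugate) is used, and it is the step I expect to flag as essential. The remaining work is bookkeeping with the transitive Galois action — in particular the harmless point that a largest exponent can always be transported onto $\al$ itself, which also quietly uses that $\al_1,\dots,\al_d$ is a complete set of conjugates. One should dispose separately of the degenerate case $d=1$, where $\al$ is an integer $>1$ and $\al^{k_1}=1$ forces $k_1=0$. Finally, it is worth noting that the lemma only asserts the \emph{necessity} of $k_1=\dots=k_d$; e.g.\ for the golden ratio, whose two conjugates have product $-1$, the common exponent must additionally be even, which is consistent with the statement.
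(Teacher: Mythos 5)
Your proof is correct and complete. Note that the paper does not actually prove this lemma: it is imported verbatim from Mignotte's note \cite{mig} and used as a black box, so there is no in-paper argument to compare against. Your derivation is a clean self-contained one, and it follows what is essentially the standard (and, to my knowledge, Mignotte's own) route: use the transitive Galois action to transport a maximal exponent $\bar k>0$ onto the dominant conjugate, then play the resulting identity $\bar k\log\al=\sum_{j\ge 2}e_j\log(1/|\al_j|)$ against the norm bound $\sum_{j\ge 2}\log(1/|\al_j|)=\log\al-\log|N_{\Q(\al)/\Q}(\al)|\le\log\al$, which forces $|N(\al)|=1$ and equality throughout; since each $\log(1/|\al_j|)$ is strictly positive (no conjugate of a Pisot number is zero or on the unit circle except $\al$ itself, which is outside), every $e_j$ must equal $\bar k$, and the exponent multiset being a permutation of $(k_1,\dots,k_d)$ finishes the argument. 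You correctly identify the one substantive input (that $\al$ is an algebraic integer, so $|f(0)|\ge 1$), the reduction to $\bar k>0$ via reciprocation is legitimate, and the side remarks on $d=1$ and on the lemma asserting only necessity are accurate.
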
 

The lemma implies, for instance, that no two non-real conjugates of a
Pisot number can have the same argument. Indeed, if the arguments of $\al_1$ and $\al_2$ were equal, then we would
have the non-trivial multiplicative relation $\alpha_1 \overline{\alpha_2} 
\alpha_2^{-1} \overline{\alpha_1}^{-1}=1$ with four conjugates of a Pisot number $\al$ (namely, with $\al_1,\al_2,\overline{\al_1},\overline{\al_2}$; the exponents of other $\deg \al-4$ conjugates in this equality are all equal to zero),
which is impossible, by Lemma~\ref{mignot}.
This simple fact is, basically, everything we know about the geometry of the set of conjugates of a Pisot number.  

In \cite{smydub0},  the first named author and Smyth investigated some non-trivial geometric facts about the set of conjugates of a Salem number. It was proved, for instance, that no three conjugates of a Salem number lie on a line. In conclusion, they asked 
if
two non-real conjugates of a Pisot number can have the same
imaginary part and if 
four conjugates of a Pisot number can have the
same real part. In both cases, the complex (non-real) numbers 
$\al_1,\al_3$ that are conjugates of a Pisot number $\al$
and their complex conjugates $\al_2$, $\al_4$
form a parallelogram (possibly degenerate) in the complex plane $\C$.  Thus,  
a non-trivial additive 
relation 
\begin{equation}\label{m2}
\al_1+\al_2=\al_3+\al_4
\end{equation}
in distinct conjugates $\al_1,\al_2,\al_3,\al_4$ of a Pisot number $\al$ holds.
(Here, $\al_2=\overline{\al_3},\al_4=\overline{\al_1}$ in case
$\al_1,\al_3$ have the same imaginary part, and $\al_2=\overline{\al_1}, \al_4=\overline{\al_3}$ in case $\al_1,\al_3$, where $\al_1 \ne \overline{\al_3}$, have the same real part.)

In \cite{smydub0}, the following example 
\begin{equation}\label{ex_d4}
\alpha:= \al_1 = \frac{1+\sqrt{3+2\sqrt{5}}}{2}
\end{equation}
with minimal polynomial 
\[
f(x)=x^4-2x^3+x-1
\] whose conjugates satisfy \eqref{m2} was found. 
Indeed, the number $\alpha=1.86676\dots$ defined in \eqref{ex_d4} is a Pisot number, with conjugates
$$\alpha_2=\frac{1-\sqrt{3+2\sqrt{5}}}{2}, \>\>\>
\alpha_{3,4}=\frac{1 \pm i\sqrt{-3+2\sqrt{5}}}{2}$$ satisfying
 $\alpha_2=-0.86676\dots,$
$|\alpha_3|=|\alpha_4|=0.78615\dots.$  Hence, two real conjugates
$\alpha_1,$ $\alpha_2$ and two complex conjugate numbers $\alpha_3$
and $\alpha_4=\overline{\al_3}$ satisfy
$\alpha_1+\alpha_2=\alpha_3+\alpha_4=1$, 
so that \eqref{m2} is solvable in conjugates of this Pisot number $\al$. 

Since the property \eqref{m2} is quite unusual, it is quite tempting to conjecture that there are not many (possibly only finitely many) Pisot numbers $\al$ satisfying it. As an unrelated result with Pisot numbers satisfying another odd property one can mention that of Smyth \cite{smy2}, where he proved that there are only eleven special Pisot numbers $\al$ such that $\al/(\al-1)$ is also a Pisot number (this result was motivated by the papers \cite{la1}, \cite{la2}). 

The main theorem of our paper shows that this is indeed the case with the property \eqref{m2}.
More precisely, the special Pisot number \eqref{ex_d4} is a unique Pisot number 
whose conjugates satisfy \eqref{m2}:

\begin{theorem}\label{viens}
If $\al$ is a Pisot number of degree $d \geq 4$ whose four distinct conjugates over $\Q$ satisfy the relation $$\al_1+\al_2=\al_3+\al_4$$ then
\[
\al=\frac{1+\sqrt{3+2\sqrt{5}}}{2}.
\]
Moreover, there exists no Pisot number $\al$ whose four distinct conjugates satisfy the linear relation
\[
\pm \al_1 = \al_2 + \al_3 + \al_4.
\] 
\end{theorem}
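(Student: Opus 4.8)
The plan is to treat both of the relations $+\al_1=\al_2+\al_3+\al_4$ and $-\al_1=\al_2+\al_3+\al_4$ (the latter being $\al_1+\al_2+\al_3+\al_4=0$) exactly as the relation \eqref{m2} is treated in the proof of the first statement: each is a linear relation with coefficients $\pm1$ among four distinct conjugates of $\al$, so the same reduction applies, and it remains only to check that it now produces no Pisot number. Two cheap observations do most of the bookkeeping. First, a Pisot number $\al$ has exactly one conjugate outside the open unit disc, namely $\al$ itself; so I would choose $\sigma$ in the Galois group of the splitting field of the minimal polynomial of $\al$ with $\sigma\al_1=\al$ (possible since $\al$ is a conjugate of $\al_1$), apply it to the relation, and solve for $\al$, obtaining $\al=\pm\sigma\al_2\pm\sigma\al_3\pm\sigma\al_4$. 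The three conjugates on the right are different from $\al$ --- otherwise two of $\al_1,\al_2,\al_3,\al_4$ would coincide --- hence each has modulus $<1$, and therefore $1<\al<3$. Second, summing the relation $\pm\al_1=\al_2+\al_3+\al_4$ over all $\sigma$ in that Galois group and using that, for each fixed index $i$, the values $\sigma\al_i$ run over all conjugates of $\al$ equally often, one gets $\pm\mathrm{tr}(\al)=3\,\mathrm{tr}(\al)$; hence $\mathrm{tr}(\al)=0$ in both cases, i.e.\ the coefficient of $x^{d-1}$ in the minimal polynomial of $\al$ vanishes.

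These two facts already dispose of $d=4$ with the $+$ sign: then $\al_1,\al_2,\al_3,\al_4$ are all the conjugates, so $\mathrm{tr}(\al)=\al_1+(\al_2+\al_3+\al_4)=2\al_1$, forcing $\al_1\in\Q$ and contradicting $\deg\al_1=4$. With the $-$ sign and $d=4$ the relation only says $\mathrm{tr}(\al)=0$, but then $1<\al<3$ confines every coefficient of the trace-zero quartic minimal polynomial $x^4+bx^2+cx+r$ to an explicit finite set ($|r|<\al<3$; $b$ differs from $-\al^2$ by less than $3$; $|c|<3\al+1$), so finitely many candidates remain and one checks directly that none of them is Pisot.

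For $d\ge5$ the four conjugates form a proper subset, and the relation must be fed into the degree-bounding argument used for the first statement, now with the additional inputs $1<\al<3$, $\mathrm{tr}(\al)=0$, and the Galois-stability of the relation; this yields an explicit upper bound for $d$, after which a finite, computer-assisted search through the remaining minimal polynomials completes the proof. The hard part is exactly this last step: everything preceding it is routine, whereas turning the rigidity of the additive relation together with the bound $\al<3$ into an absolute bound on the degree is the technical core of the argument --- the same work already needed to prove the first statement.
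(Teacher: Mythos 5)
Your overall architecture (bound $\al<3$ via a Galois automorphism sending $\al_1$ to the Pisot number itself, then bound the degree, then finish with a finite search) matches the paper's, and your trace observation is correct and is a genuine addition: averaging the relation $\pm\al_1=\al_2+\al_3+\al_4$ over the Galois group of the splitting field does give $\pm t=3t$, hence $t=\mathrm{tr}(\al)=0$, and this kills $d=4$ with the $+$ sign instantly. However, there is a genuine gap at exactly the step you flag as ``the technical core'': you never explain how to convert the additive relation together with $1<\al<3$ into an absolute bound on $d$, deferring it to ``the same work already needed to prove the first statement'' --- which your proposal also does not carry out. The missing tool is the Beukers--Zagier height inequality: writing the relation as $\be_1+\be_2+\be_3+\be_4=0$ with $\be_i=\pm\al_i$, one gets $h(\be_1)+\dots+h(\be_4)\ge\tfrac12\log\bigl((1+\sqrt5)/2\bigr)$, which for a Pisot number reads $4\log\al/d\ge\tfrac12\log\bigl((1+\sqrt5)/2\bigr)$ and yields $d\le 8\log 3/\log\bigl((1+\sqrt5)/2\bigr)<18.3$. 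Moreover, Beukers--Zagier carries the hypothesis $\be_1^{-1}+\dots+\be_4^{-1}\ne 0$, whose failure here would force the multiplicative relation $\al_1^2\al_2^2=\al_3^2\al_4^2$; ruling that out requires Mignotte's theorem that conjugates of a Pisot number satisfy no nontrivial multiplicative relations. Without these two inputs the degree bound, and hence the finiteness of the whole problem, is not established.

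A second omission is the first assertion of the theorem itself: your sketch takes its proof as given, but the uniqueness of $(1+\sqrt{3+2\sqrt{5}})/2$ requires its own degree-$4$ analysis (the trace must equal $2$, so $\al_1+\al_2=\al_3+\al_4=1$; the minimal polynomial then factors as $(x^2-x+\be_1)(x^2-x+\be_2)$ with $\be_1,\be_2$ conjugate quadratic integers confined to $(-2,0)$ and $(0,1)$, and a Kronecker-type lemma forces $y^2+y-1$), plus the same Beukers--Zagier degree bound and search for $5\le d\le 18$. Finally, note that the ``finite, computer-assisted search'' you invoke is not a formality: after the bound $d\le 18$ one must enumerate on the order of two million Pisot numbers in the intervals $\bigl(((1+\sqrt5)/2)^{d/8},3\bigr)$ via Boyd's algorithm and test each candidate with a resultant criterion, so the reduction to a \emph{tractable} finite check is itself part of the work your proposal leaves undone.
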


Since, by Lemma~\ref{smmm} below, a real and a non-real conjugate 
of any algebraic number cannot have the same real part, Theorem~\ref{viens} implies that, in particular,

\begin{corollary}\label{co12}
No two non-real conjugates of a Pisot number can have the same
imaginary part and at most two conjugates of a Pisot number can have the
same real part. 
\end{corollary}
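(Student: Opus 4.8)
The plan is to derive both halves of the corollary directly from Theorem~\ref{viens}, using Lemma~\ref{smmm} only in the second half. In each case the idea is identical: out of the hypothesised coincidence of imaginary parts (resp.\ real parts) I manufacture four \emph{distinct} conjugates $\al_1,\al_2,\al_3,\al_4$ of the Pisot number $\al$ with $\al_1+\al_2=\al_3+\al_4$; Theorem~\ref{viens} then forces $\al=(1+\sqrt{3+2\sqrt5})/2$; and I close the argument by observing that this single number does \emph{not} in fact possess the forbidden configuration (its conjugates are listed right after \eqref{ex_d4}), so the situation cannot occur. The only preparatory remark needed is that, the minimal polynomial of $\al$ being real, complex conjugation permutes the conjugates of $\al$.

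First, suppose a Pisot number $\al$ has two distinct non-real conjugates $\be=a+bi$ and $\be'=c+bi$ with the same imaginary part $b$. Then $b\neq0$, and $a\neq c$ because $\be\neq\be'$. The four numbers $\be,\ \overline{\be'}=c-bi,\ \be',\ \overline{\be}=a-bi$ are conjugates of $\al$, they are pairwise distinct (a direct check using $b\neq0$ and $a\neq c$), and
\[
\be+\overline{\be'}=a+c=\be'+\overline{\be}.
\]
Since these are four distinct conjugates, $\deg\al\geq4$, so Theorem~\ref{viens} applies: $\al=(1+\sqrt{3+2\sqrt5})/2$. But the non-real conjugates of that number are exactly $(1\pm i\sqrt{-3+2\sqrt5})/2$, with the two distinct imaginary parts $\pm\sqrt{-3+2\sqrt5}/2$ — a contradiction. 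Hence no Pisot number has two distinct non-real conjugates with equal imaginary part.

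Second, suppose a Pisot number $\al$ has three distinct conjugates $\be_1,\be_2,\be_3$ of the same real part $r$. By Lemma~\ref{smmm}, a real and a non-real conjugate of $\al$ cannot have the same real part; hence the $\be_i$ are either all real — in which case each equals $r$, contradicting distinctness — or all non-real. In the non-real case $\overline{\be_1}$ can coincide with at most one of $\be_2,\be_3$, so among the three I may pick $\be$ and $\be'$ with $\be'\notin\{\be,\overline{\be}\}$. Together with $\re(\be)=\re(\be')=r$ this yields $\im(\be)\neq0$, $\im(\be')\neq0$, and $\im(\be')\neq\pm\im(\be)$, so $\be,\ \overline{\be},\ \be',\ \overline{\be'}$ are four distinct conjugates of $\al$ with
\[
\be+\overline{\be}=2r=\be'+\overline{\be'}.
\]
Again $\deg\al\geq4$ and Theorem~\ref{viens} forces $\al=(1+\sqrt{3+2\sqrt5})/2$; but the conjugates of that number have real parts $1.86676\dots$, $-0.86676\dots$, $\tfrac12$, $\tfrac12$, no value occurring three times — a contradiction. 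Hence at most two conjugates of a Pisot number can have the same real part.

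I expect no real obstacle: the whole weight rests on Theorem~\ref{viens} (and slightly on Lemma~\ref{smmm}). The two points demanding attention are, first, the distinctness bookkeeping for the constructed quadruples — this is precisely what rules out the degenerate cases ($\be=\be'$, or $\be'=\overline{\be}$), where the relation $\al_1+\al_2=\al_3+\al_4$ would be trivial and Theorem~\ref{viens} inapplicable — and, second, the observation that the unique exceptional Pisot number supplied by Theorem~\ref{viens} does itself satisfy both assertions of the corollary, so that being forced onto it produces the desired contradiction and not a counterexample.
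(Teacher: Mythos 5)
Your proposal is correct and follows essentially the same route as the paper: the paper reduces both claims to the four-term relation $\al_1+\al_2=\al_3+\al_4$ via the same parallelogram construction (taking complex conjugates of the offending pair), invokes Theorem~\ref{viens}, and uses Lemma~\ref{smmm} to rule out a real and a non-real conjugate sharing a real part. You merely make explicit the distinctness bookkeeping and the final check that the exceptional number $(1+\sqrt{3+2\sqrt{5}})/2$ itself satisfies both assertions, details the paper leaves implicit.
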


Note that Corollary~\ref{co12} answers negatively both questions posed in \cite{smydub0}. 
In addition to Theorem \ref{viens}, we will consider a three term linear equations in conjugates of a Pisot number. We show that

\begin{theorem}\label{du1}
If $\al$ is a Pisot number whose three conjugates over $\Q$ satisfy the relation 
\[
\al_1 + \al_2 + \al_3 = 0
\]
then $\al$ is Siegel's number $\theta = 1.32471\dots$ (the 
root of $x^3-x-1=0$). Furthermore, there does not exist a Pisot number $\al$ whose three conjugates
    satisfy the relation
\[
\al_1=\al_2+\al_3.
\]
\end{theorem}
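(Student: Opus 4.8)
The plan is to treat the two equations separately. In each case I would first normalise the relation using the Galois group, then settle the small degrees by hand, and for the remaining degrees manufacture a genuine four-term additive relation between conjugates of $\al$ and invoke Theorem~\ref{viens}. Exactly one configuration escapes this reduction — the one in which the conjugates split into triples that each sum to zero — and I expect that to be the crux.

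Let $f$ be the minimal polynomial of $\al$, let $L$ be its splitting field, and put $G=\Gal(L/\Q)$. Each of the two relations is stable under $G$, so summing it over $G$ and using that $G$ acts transitively on the conjugates shows that $\al$ has trace $0$. After applying a suitable element of $G$ we may assume that $\al$ is one of the conjugates occurring in the relation; as $\al$ is the only conjugate of modulus $\ge1$ and the remaining two lie in the open unit disc, this already gives $1<\al<2$. If $\deg\al=3$ the relation is a condition on the integer coefficients of $f=x^3+bx+c$: from $\al=\pm(\al_2+\al_3)$, $1<\al<2$, and $|c|=\al\,|\al_2\al_3|\ge1$ one gets $|c|=1$ and only finitely many admissible values of $b$; inspecting these for irreducibility and the Pisot condition leaves exactly $x^3-x-1$ for $\al_1+\al_2+\al_3=0$, and nothing for $\al_1=\al_2+\al_3$. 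If $\deg\al=4$ and $\al_1+\al_2+\al_3=0$, then the vanishing of the trace makes the fourth conjugate equal to $-(\al_1+\al_2+\al_3)=0$, a rational number, which is impossible.

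For $\al_1=\al_2+\al_3$ with $\deg\al\ge4$, and for $\al_1+\al_2+\al_3=0$ with $\deg\al\ge5$, I would translate the relation by an element $g\in G$ that carries one of the two conjugates different from $\al$ onto $\al$, and then eliminate $\al$ between the relation and its $g$-translate. This yields an equation $\be_1=\be_2+\be_3+\be_4$ in the first case and $\be_1+\be_2=\be_3+\be_4$ in the second, with all $\be_i$ conjugates of $\al$. When the $\be_i$ are distinct, Theorem~\ref{viens} finishes the job: the first shape is impossible, while the second forces $\al$ to be the root of $x^4-2x^3+x-1$, of degree $4$, contradicting $\deg\al\ge5$. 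The degenerate choices of $g$, for which two of the $\be_i$ coincide, I would dispose of by iterating $g$: the sequence $\al,g\al,g^2\al,\dots$ then satisfies a fixed linear recurrence whose characteristic polynomial is $x^2\pm x-1$ and therefore has a root of modulus $>1$, while the sequence itself is bounded since $g$ has finite order; so it must vanish identically, i.e.\ $\al=0$, which is absurd. In the few remaining degenerate sub-cases the same bookkeeping exhibits $-\al$ (of modulus $\al>1$) or $0$ as a conjugate of $\al$, again impossible. This settles $\al_1=\al_2+\al_3$ entirely.

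What remains is $\al_1+\al_2+\al_3=0$ with $\deg\al\ge5$ when the translate above produces nothing new, and this I expect to be the main obstacle. A short counting argument shows that then either $\al$ lies in two distinct triples of the $G$-orbit of $\{\al_1,\al_2,\al_3\}$ — which supplies a four-term relation $\al_2+\al_3=\be_3+\be_4$ and hence, by Theorem~\ref{viens} once more, forces $\deg\al=4$, a contradiction — or the $d=\deg\al$ conjugates partition into $d/3$ triples summing to $0$, one being $\{\al,\al_2,\al_3\}$. In this rigid case one factors $f(x)=\prod_{k=1}^{d/3}(x^3+p_kx+q_k)$, where $\{(p_k,q_k)\}$ is a single $G$-orbit and $x^3+p_1x+q_1$ is the block through $\al$. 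Every other block consists of conjugates of modulus $<1$, so $q_1=-\al\al_2\al_3$ is, among its conjugates, the only one of modulus $\ge1$; moreover $\prod_k q_k=f(0)$ is a nonzero rational integer whose absolute value, being the product of the moduli of the conjugates of $\al$, is $<\al<2$, hence equals $\pm1$, and then $|q_1|=1/\prod_{k\ge2}|q_k|>1$ while $|q_1|=\al\,|\al_2\al_3|<\al<2$. Thus $q_1$ is a real unit (real, since a complex $q_1$ would give $\overline{q_1}$ as a second conjugate of modulus $>1$) lying in $(1,2)$ in absolute value, with all further conjugates in the unit disc. Feeding this back, together with $p_1=-\al^2+\al_2\al_3$, with the integrality of $\sum_kq_k,\ \sum_kp_k,\ \prod_kq_k,\dots$, and with the constraints on $\al_2,\al_3$ from $\al_2+\al_3=-\al$, one pins down the minimal polynomial of this small unit (the dominant-term bounds leave only finitely many candidates; for $d=6$ one gets $|q_1|=(1+\sqrt5)/2$), and in each surviving case contradicts the Pisot condition on $\al$. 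This final finite check is presumably where the computational support credited in the paper comes in.
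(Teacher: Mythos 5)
Your reduction of both equations to Theorem~\ref{viens} is a genuinely different (and in places slicker) route than the paper's: the paper never uses Theorem~\ref{viens} to prove Theorem~\ref{du1} (except as an aside in its Remark~2 for the single case $d=8$, $\al_1=\al_2+\al_3$), but instead bounds the degree by $d\le 8$ via the Beukers--Zagier inequality (Lemma~\ref{sammm}), kills the prime degrees with Kurbatov's lemma, and finishes the surviving degrees $3,6,8$ by enumerating all Pisot numbers in the intervals $(\tau^{d/6},2)$. Your treatment of $\al_1=\al_2+\al_3$ is essentially complete: the translate $g\al_1=\al_2+\al_3+g\al_3$ with $g\al_2=\al_1$ either has four distinct conjugates (contradicting Theorem~\ref{viens}) or degenerates into $-\al$ being a conjugate of $\al$, into a relation $g\al_1=2\al_2+\al_3$ killed by Lemma~\ref{dubickas} (or your recurrence trick), or into $\al_i=0$. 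Likewise, for $\al_1+\al_2+\al_3=0$ your dichotomy is correct: either two distinct zero-sum triples meet, producing $\al_i+\al_j=\al_k+\al_l$ in four distinct conjugates and hence $d=4$ by Theorem~\ref{viens}, or the conjugates partition into $d/3$ disjoint zero-sum triples.

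The gap is in that last, partition case: you never bound the degree, so your ``final finite check'' is in fact an infinite family of cases $d=6,9,12,15,\dots$. The structural information you extract --- that $q_1=-\al_1\al_2\al_3$ is a real unit with $1<|q_1|<2$ whose remaining conjugates lie in the open unit disc, i.e.\ that $-q_1$ is a Pisot unit of degree dividing $d/3$ --- does not pin anything down for unbounded $d$: there are infinitely many Pisot units in $(1,2)$, one for each admissible degree, so the ``dominant-term bounds'' you invoke leave infinitely many candidates overall, and even a determination of $q_1$ would not yet determine $\al$. What is missing is precisely the paper's diophantine step: apply Lemma~\ref{sammm} to $\be_j:=\al_j$ ($j=1,2,3$), verify the hypothesis \eqref{patikr} via Mignotte's Lemma~\ref{mignot}, and conclude $3\log\al/d\ge\tfrac12\log\bigl((1+\sqrt5)/2\bigr)$, which with $\al<2$ forces $d\le 8$ and hence reduces your partition case to $d=6$ only (where your unit analysis, or the paper's Table~\ref{pisot_d6}, does close it out). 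Without such a height bound, or some other argument excluding large $d$ in the partition case, the proof of the first half of Theorem~\ref{du1} is incomplete.
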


The paper is organized as follows. In Section \ref{aux}, we state some auxiliary results that will be used in our proofs. In Section \ref{du_d7}, we will prove Theorem \ref{du1}. First, we obtain a small upper bound (to be precise $d \leq 8$) for the degree of a potential Pisot number $\al$ whose conjugates might satisfy the linear relations of Theorem~\ref{du1} and restrict the possible candidates to certain short subintervals of $[1, 2]$. Then, we will eliminate all easy cases using various results on the linear relations in conjugate numbers from Section \ref{aux} and some elementary combinatorial arguments. Finally, a small set of remaining candidates will be examined with a computer. A detailed account on our computations is postponed to Section~\ref{calc}. For further investigations of simple three term linear relations among the conjugate algebraic numbers of low degree (not necessarily Pisot numbers) we refer to the forthcoming paper \cite{DJ}.

The proof of the main result, Theorem \ref{viens}, also splits into two parts. The first, diophantine part (bound on the degree $d \leq 18$ and the restrictions on intervals to which all the potential candidates may belong), will be established in Section \ref{proof_viens}. The second, computational part, will be carried out in Section~\ref{calc}. The key difference from the proof of Theorem \ref{du1} is that the computational part is rather heavy and requires many hours of massive calculations distributed on multiple machines.

\section{Auxiliary lemmas}\label{aux}

In addition to the result of Mignotte \cite{mig} that was cited as Lemma \ref{mignot} in Section \ref{intro}, we will need several more results.

Recall that {\it the Weil height}  $h(\ga)$ of an algebraic number $\ga$ of degree $n$ with the minimal polynomial
\[
c_n(x-\ga_1) \dots (x-\ga_n) \in \Z[x], \quad c_n \in \N,
\]
is defined by
\[
h(\ga)=\frac{\log M(\ga)}{n},
\]
where
 \[
 M(\ga)=c_n \prod_{j=1}^n \max\{1,|\ga_j|\}
 \]
 is {\it the Mahler measure} of $\ga$. In particular, if one takes $\ga = \al$, where $\al$ is a Pisot number and $n = \deg \al = d$, then $h(\al) = \log{(\al)}/d$. 
 
We shall use the following result due to Beukers and Zagier, stated as Corollary 2.1 in \cite{beu} :
 
 \begin{lemma}[Beukers and Zagier]\label{sammm}
Let $\be_1,\dots,\be_r$ be non-zero algebraic numbers such that the sum
$N=\be_1+\dots+\be_r$ is a rational integer. If 
\begin{equation}\label{patikr}
\be_1^{-1}+\dots+\be_r^{-1} \ne N
\end{equation}
 then 
$$h(\be_1)+\dots+h(\be_r) \geq \frac{1}{2} \log \left(\frac{1+\sqrt{5}}{2}\right).$$  
\end{lemma}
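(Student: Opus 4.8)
The plan is to prove this as a multivariable analogue of Zagier's theorem on algebraic numbers close to both $0$ and $1$ (the case $x+y=1 \Rightarrow h(x)+h(1-x)\ge \tfrac12\log\tfrac{1+\sqrt5}2$), reducing the global inequality to a sharp local estimate via the product formula. Fix a number field $K$ (Galois over $\Q$, say) containing $\be_1,\dots,\be_r$, write $h(\be)=\frac1{[K:\Q]}\sum_v d_v\log\max(1,|\be|_v)$ over the places $v$ of $K$ with local degrees $d_v$, and use the two elementary facts $h(\be)=h(\be^{-1})$ and $\log\max(1,t)+\log\max(1,t^{-1})=\log\max(t,t^{-1})$. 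Adding the height of each $\be_i$ and that of its inverse yields the clean reduction
\[
2\sum_{i=1}^r h(\be_i)=\frac1{[K:\Q]}\sum_v d_v\,\mu_v,\qquad \mu_v:=\sum_{i=1}^r\log\max\bigl(|\be_i|_v,|\be_i|_v^{-1}\bigr)\ge 0,
\]
so that it suffices to show $\sum_v d_v\,\mu_v\ge[K:\Q]\log\frac{1+\sqrt5}2$.

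Next I would exploit the two constraints $\sum_i\be_i=N\in\Z$ and $\sum_i\be_i^{-1}=T\ne N$ to bound the $\mu_v$ from below. The integrality of $N$ makes the finite places harmless: since $N$ and $1$ are local integers at every non-archimedean $v$, the ultrametric inequality applied to $\sum_i\be_i=N$ and $\sum_i\be_i^{-1}=T$ gives lossless lower bounds for $\mu_v$ there, while the archimedean places are where the constant is produced. The non-degeneracy $T\ne N$ should be encoded through an auxiliary quantity such as $\Delta=N-T=\sum_i(\be_i-\be_i^{-1})\in K^\ast$, the analogue of Zagier's factor $z^2-z+1$: it vanishes exactly in the excluded case, and a suitable resultant-type integer assembled from it and the two relations is then nonzero, hence at least $1$ in absolute value, so that the product formula lets it be absorbed with a non-negative sign. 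Feeding a sharp pointwise inequality at each archimedean place — one that compares $\prod_i\max(|\be_i|_v,|\be_i|_v^{-1})$ with $|\Delta|_v$ and is optimized by configurations tied to $x^2-x-1$ and the primitive sixth roots of unity — into the displayed identity and summing over $v$ should produce the constant $\frac12\log\frac{1+\sqrt5}2$.

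The main obstacle is precisely this sharp local inequality together with the correct choice of the integral non-degeneracy quantity. In the two-variable case Zagier's pointwise estimate and the nonzero integer $\prod_i(\al_i^2-\al_i+1)$ settle the matter, the golden ratio emerging as the optimum; the substantive difficulty here is the passage to $r$ terms, where the tuple $(\be_1,\dots,\be_r)$ need not be a full Galois orbit (in the intended application it is a signed sub-multiset of one), so the right symmetric, Galois-equivariant quantity and its optimization must be set up on the hypersurface cut out by the two linear relations rather than for a single minimal polynomial — exactly the hypersurface-height framework of Beukers and Zagier. Checking that the locus where the bound degenerates to height $0$ is indeed $T=N$ (for instance $\be_1=e^{i\pi/3}$, $\be_2=e^{-i\pi/3}$, where $\sum_i\be_i=\sum_i\be_i^{-1}=1$) pins down both the constant and the hypothesis, and once the local inequality is in hand the global bound follows at once from the product formula and the displayed reduction.
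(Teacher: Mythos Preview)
The paper does not prove this lemma at all: it is stated as Corollary~2.1 of Beukers and Zagier and simply cited. So there is no ``paper's own proof'' to compare against; the authors treat it as a black box input to their argument.

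As for your sketch, it is a fair outline of the strategy in the original Beukers--Zagier paper: double the height via $h(\be)=h(\be^{-1})$ to get the local quantities $\mu_v=\sum_i\log\max(|\be_i|_v,|\be_i|_v^{-1})$, use the product formula, and seek a sharp archimedean pointwise inequality involving an auxiliary nonzero algebraic quantity that witnesses the hypothesis $\sum_i\be_i^{-1}\ne N$. But your write-up is a plan, not a proof. You explicitly flag the sharp local inequality and the correct choice of the nondegeneracy quantity as ``the main obstacle'' and do not supply either one. In the actual Beukers--Zagier argument the pointwise estimate and the way the hypersurface condition enters are the entire content; without them nothing has been shown. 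Your identification of the equality case (primitive sixth roots of unity, giving $T=N$) is correct and explains why the hypothesis is needed, but that is a sanity check, not a step of the proof.

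In short: there is nothing in the present paper to compare with, and your proposal correctly locates the architecture of the Beukers--Zagier proof while leaving its substantive analytic core unproved.
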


The result of Beukers and Zagier was motivated by Schinzel's inequality \cite{schi} (see also \cite{schi1}) which asserts that that the Weil height of a totally real algebraic integer $\ga \notin \{-1,0,1\}$  satisfies
\[
h(\ga) \geq \frac{1}{2} \log \left(\frac{1+\sqrt{5}}{2}\right).
\]
 
The generalizations of Lemma \ref{sammm} were proven by Samuels \cite{sam} and Garza, Ishak and Piner \cite{garza}.

We shall also use the next result which was first proved by Kurbatov \cite{kurb}
(see also \cite{dub} for various generalizations and more references on this problem). 

\begin{lemma}[Kurbatov]\label{kurbat}
The equality 
$$
k_1\al_1+k_2 \al_2+ \dots + k_d \al_d=0
$$
with conjugates $\al_1,\al_2,\dots,\al_d$ of an algebraic number $\al$ of prime degree 
$d$ over $\Q$ and $k_1,k_2,\dots,k_d \in \Z$ can only hold if $k_1=k_2=\dots=k_d$.
\end{lemma}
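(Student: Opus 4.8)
The plan is to deduce the statement from the representation theory of the cyclic group of order $d$, where the hypothesis that $d$ is prime enters in an essential way. Since clearing denominators turns a $\Q$-linear relation into a $\Z$-linear one and conversely, it suffices to prove that the $\Q$-vector space
\[
V=\bigl\{(k_1,\dots,k_d)\in\Q^d:\ k_1\al_1+\dots+k_d\al_d=0\bigr\}
\]
is contained in the line $\Q\cdot(1,1,\dots,1)$; for then every integer relation is a multiple of $\al_1+\dots+\al_d=0$, which forces $k_1=\dots=k_d$ (and forces $k=0$ unless the trace of $\al$ vanishes).

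First I would pass to the Galois closure $L$ of $\Q(\al)/\Q$ and set $G=\Gal(L/\Q)$, acting on the set $\{\al_1,\dots,\al_d\}$ of conjugates; this action is transitive, so $d\mid\abs{G}$. Applying $\sigma\in G$ to a relation $\sum_i k_i\al_i=0$ permutes the $\al_i$ and produces another relation, so $V$ is stable under the induced permutation action of $G$ on the coordinates of $\Q^d$; in particular $V$ is invariant under each individual $\sigma$. By Cauchy's theorem $G$ contains an element $\sigma$ of order $d$, and since $d$ is prime its image in the symmetric group $S_d$ must be a single $d$-cycle (an order-$d$ permutation of $d$ letters has all cycle lengths dividing $d$ and summing to $d$, hence is one $d$-cycle). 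Relabelling the conjugates, we may assume $\sigma$ acts on $\Q^d$ by a cyclic shift of coordinates.

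Now $\Q^d$ equipped with this cyclic shift is precisely the regular representation of $\Z/d\Z$, that is $\Q[\Z/d\Z]\cong\Q[x]/(x^d-1)$. Because $d$ is prime, $x^d-1=(x-1)\Phi_d(x)$ with $\Phi_d$ irreducible over $\Q$, so by the Chinese Remainder Theorem $\Q[\Z/d\Z]\cong\Q\times\Q(\zeta_d)$ is a direct sum of two \emph{non-isomorphic simple} modules. Consequently the only $\sigma$-invariant subspaces of $\Q^d$ are $0$, the line $\Q\cdot(1,\dots,1)$, the hyperplane $W=\{(k_i):\sum_i k_i=0\}$, and $\Q^d$ itself. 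Since $V$ is one of these four, it remains only to exclude $V=\Q^d$ and $V=W$: the first would give $\al_1=0$ and the second $\al_1=\al_2$, both impossible for a number of (prime, hence $\ge 2$) degree over $\Q$. Hence $V\subseteq\Q\cdot(1,\dots,1)$, which is what we wanted.

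I do not expect a serious obstacle here; the content of the argument is the structural observation, and the single place that genuinely uses the hypothesis — and therefore the point to state carefully — is the combination of "an order-$d$ element of a transitive subgroup of $S_d$ is a $d$-cycle" with the factorisation $x^d-1=(x-1)\Phi_d(x)$ into just two irreducible factors. For composite $d$ the module $\Q^d$ carries many more $\sigma$-invariant subspaces, the reduction above breaks down, and indeed non-trivial relations among conjugates do occur, so the primality cannot be dropped.
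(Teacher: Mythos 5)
The paper does not prove this lemma at all: it is quoted verbatim from Kurbatov's 1977 paper (reference \cite{kurb}), with \cite{dub} cited for generalizations, so there is no in-paper argument to compare against. Your proof is correct and is essentially the standard module-theoretic argument for this result: the relation space $V$ is a $G$-stable subspace of the permutation module $\Q^d$, a transitive action of $G$ on $d$ points with $d$ prime yields (via Cauchy) an element acting as a $d$-cycle, and $\Q[x]/(x^d-1)\cong\Q\times\Q(\zeta_d)$ has exactly four submodules, of which only $0$ and $\Q\cdot(1,\dots,1)$ survive the obvious exclusions ($\al_1\ne 0$, $\al_1\ne\al_2$). The one point you should state explicitly rather than leave implicit is that the order-$d$ element $\sigma\in G$ actually maps to an order-$d$ permutation of the conjugates: this needs the faithfulness of the action of $G=\Gal(L/\Q)$ on $\{\al_1,\dots,\al_d\}$, which holds because $L$ is generated over $\Q$ by the conjugates, so an element fixing all of them is the identity. (Since $d$ is prime, the image of $\sigma$ has order $1$ or $d$, and faithfulness rules out order $1$; only then does your cycle-type argument apply.) With that one sentence added, the argument is complete, and your closing remark correctly identifies where primality is indispensable.
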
 

In \cite{my} Smyth proved that 
\begin{lemma}[Smyth]\label{smmm}
If $\al_1,\al_2,\al_3$ are three conjugates of an algebraic number satisfying 
$\al_1 \ne \al_2$ then $2\al_1 \ne \al_2+\al_3$.  
\end{lemma}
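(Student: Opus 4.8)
\textbf{Proof plan for Lemma~\ref{smmm} (Smyth).}
The plan is to argue by contradiction: suppose $\al_1,\al_2,\al_3$ are conjugates of an algebraic number $\al$ with $\al_1\neq\al_2$ and $2\al_1=\al_2+\al_3$. The key idea is to exploit the Galois action on the relation together with a counting/averaging argument over all conjugates. First I would pass to the Galois closure $K$ of $\Q(\al)/\Q$ with $G=\Gal(K/\Q)$, and regard the relation $2\al_1-\al_2-\al_3=0$ as a vanishing $\Z$-linear combination of conjugates. Applying every $\sigma\in G$ and summing, each conjugate $\al_i$ appears with a total coefficient equal to $2n_i^{(1)}-n_i^{(2)}-n_i^{(3)}$, where $n_i^{(k)}$ is the number of $\sigma\in G$ sending $\al_k$ to $\al_i$; since $\sum_\sigma \al_{\sigma(k)}=\frac{|G|}{d}\,\mathrm{Tr}(\al)$ is the same for each $k$, the summed relation is trivially $0=0$, so a cruder averaging does not suffice and one must keep track of the combinatorial structure.

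The productive approach is instead to look at the relation as a single linear dependence and bound the coefficients. Normalize so that $\al$ is an algebraic integer (clearing denominators does not affect the relation). Consider the polynomial identity obtained by applying all of $G$: the multiset of triples $(\al_{\sigma(1)},\al_{\sigma(2)},\al_{\sigma(3)})$ all satisfy $2x=y+z$. I would then use the standard trick of forming the product $\prod_{\sigma\in G}\bigl(T-(2\al_{\sigma(1)}-\al_{\sigma(2)}-\al_{\sigma(3)})\bigr)$, which is a polynomial in $\Z[T]$ vanishing at $T=0$, hence has $0$ as a root with some multiplicity; more usefully, consider $\prod(T - (\al_{\sigma(2)}+\al_{\sigma(3)}))$ and $\prod(T-2\al_{\sigma(1)})$ and compare. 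The cleanest route, which I expect the authors take, is: since $2\al_1=\al_2+\al_3$, the element $\gamma:=\al_2+\al_3 = 2\al_1$ is simultaneously twice a conjugate of $\al$ and a sum of two conjugates of $\al$; looking at archimedean absolute values, for the conjugate of maximal modulus we would get $2|\al|_{\max}=|\al_2+\al_3|\le 2|\al|_{\max}$ with equality forcing $\al_2,\al_3$ collinear and of equal modulus, and iterating the Galois action one shows all conjugates must then be real and in fact that the map $\al_i\mapsto$ (the index reached via the relation) generates too large a group, contradicting $\al_1\neq\al_2$.

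Concretely, the heart of the argument is a \emph{transitivity/parity} obstruction: define, for each conjugate $\al_i$, its ``partner set'' $\{\al_j,\al_k\}$ with $2\al_i=\al_j+\al_k$ (using that the relation is Galois-stable, each $\al_i$ inherits such a pair). Consider the sum $S=\sum_i \al_i^2$ (the power sum $p_2$, a rational integer). From $2\al_i=\al_j+\al_k$ one gets $4\al_i^2=\al_j^2+\al_k^2+2\al_j\al_k$, and summing over $i$ and comparing with $4S$ versus $\sum(\al_j^2+\al_k^2)+2\sum\al_j\al_k$ yields an identity relating $S$ to $\sum \al_j\al_k$ over partner pairs; pushing this, together with $\sum_i\al_i = \mathrm{Tr}(\al)$ and the partner relations summed, to compare $\sum \al_i$ weighted by multiplicities, forces a nontrivial $\Z$-linear relation among the $\al_i$ with \emph{unequal} coefficients whenever $\al_1\neq\al_2$, which then contradicts a Kurbatov-type statement or is handled directly by the discriminant. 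The main obstacle I anticipate is the bookkeeping: the partner assignment $i\mapsto\{j,k\}$ need not be consistent across the Galois orbit (a conjugate could be the ``$\al_1$'' of one instance and an ``$\al_2$'' of another), so one must carefully track a bipartite-type incidence structure on the set of conjugates and rule out every self-consistent configuration except the forbidden $\al_1=\al_2$. I would isolate this combinatorial core as the crux and verify the small-degree base cases ($d=3,4,5$) by hand to anchor the induction.
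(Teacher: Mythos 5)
The paper does not prove this lemma at all: it is quoted from Smyth's paper \cite{my}, so there is no internal proof to compare against; the question is only whether your plan would yield a correct proof. As written it would not. You offer several candidate strategies and complete none of them, and the part you designate as the ``heart of the argument'' --- the partner-set incidence structure combined with the power sum $p_2$ --- is the weakest. You never actually derive the promised nontrivial $\Z$-linear relation among the $\al_i$ with unequal coefficients, and even if you did, the Kurbatov-type statement you want to invoke (Lemma~\ref{kurbat}) applies only in prime degree, so it cannot finish the general case; ``handled directly by the discriminant'' is not an argument. The averaging observation and the products $\prod_\sigma\bigl(T-(2\al_{\sigma(1)}-\al_{\sigma(2)}-\al_{\sigma(3)})\bigr)$ likewise lead nowhere as written. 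So there is a genuine gap: no route in the proposal is carried to a contradiction.

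The irony is that the idea you mention only in passing --- the archimedean/extremal argument --- is essentially the standard proof and closes in two lines; you stop just short of the conclusion and then veer off into unnecessary combinatorics. Suppose $2\al_1=\al_2+\al_3$ with $\al_1\ne\al_2$. Choose a conjugate $\be$ of maximal modulus (equivalently, an extreme point of the convex hull of the conjugates) and an automorphism $\sigma$ of the Galois closure with $\sigma(\al_1)=\be$. Then $2\be=\sigma(\al_2)+\sigma(\al_3)$ with $|\sigma(\al_2)|,|\sigma(\al_3)|\le|\be|$, so $2|\be|\le|\sigma(\al_2)|+|\sigma(\al_3)|\le 2|\be|$. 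Equality in the triangle inequality forces $\sigma(\al_2)$ and $\sigma(\al_3)$ to be nonnegative real multiples of one another, and equality of moduli then gives $\sigma(\al_2)=\sigma(\al_3)=\be=\sigma(\al_1)$ (the degenerate case $\be=0$ means $\al=0$, which has only one conjugate). Applying $\sigma^{-1}$ yields $\al_1=\al_2=\al_3$, a contradiction. No iteration of the Galois action, no reality of the conjugates, and no group-theoretic count is needed; the bipartite bookkeeping you flag as the main obstacle can be discarded entirely.
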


A more general version of Lemma~\ref{smmm} is Theorem 4 of \cite{dub}:
\begin{lemma}[Dubickas]\label{dubickas}
If $\be_1,\be_2, \dots,\be_n$, where $n \geq 3$, are distinct algebraic numbers conjugate over a field $K$ of characteristic zero and $k_1,k_2,\dots,k_n$
are non-zero rational numbers satisfying  $$|k_1| \geq |k_2|+\dots+|k_n|$$
then 
$$k_1 \be_1+k_2 \be_2+\dots+k_n \be_n \notin K.$$
\end{lemma}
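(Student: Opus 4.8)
The plan is to argue by contradiction: suppose that $s := k_1\be_1 + \dots + k_n\be_n \in K$, and derive a contradiction from the dominance hypothesis $|k_1| \ge |k_2| + \dots + |k_n|$ together with $n \ge 3$. First I would reduce the problem to one about complex numbers, so that phrases like ``largest real part'' make sense even though $K$ is an arbitrary field of characteristic zero. Let $m(x) \in K[x]$ be the common minimal polynomial of $\be_1,\dots,\be_n$ over $K$, and let $K_0 \subseteq K$ be the subfield generated over $\Q$ by the finitely many coefficients of $m$. Since $m$ is irreducible over $K \supseteq K_0$ and has its coefficients in $K_0$, it is irreducible over $K_0$ as well, so $\be_1,\dots,\be_n$ remain conjugate over $K_0$. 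As $K_0$ is finitely generated over $\Q$, the splitting field $M_0$ of $m$ over $K_0$ has finite transcendence degree and hence admits a field embedding $\iota \colon M_0 \hookrightarrow \C$. Through $\iota$ I may view all $d = \deg m$ roots $\ga_1,\dots,\ga_d$ of $m$ (among which $\be_1,\dots,\be_n$ occur) as $d$ distinct complex numbers. Finally, let $M \supseteq K$ be the splitting field of $m$ over $K$ and $G = \Gal(M/K)$; since $m$ is irreducible over $K$, the group $G$ acts transitively on $\{\ga_1,\dots,\ga_d\}$.

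The central device is to eliminate the unknown value $s$ by subtracting two Galois-conjugate copies of the hypothetical identity. For any $\sigma,\tau \in G$ one has $\sigma(s)=\tau(s)=s$, so $\sum_i k_i\sigma(\be_i) = \sum_i k_i\tau(\be_i)$, which rearranges to the $s$-free relation
\[
k_1\bigl(\sigma(\be_1)-\tau(\be_1)\bigr) + \sum_{i=2}^n k_i\bigl(\sigma(\be_i)-\tau(\be_i)\bigr) = 0 .
\]
Every term here is a root of $m$, hence lies in $M_0$, so this relation may be transported into $\C$ by $\iota$ even though $\sigma,\tau$ themselves need not restrict to automorphisms of $M_0$. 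I then fix a direction $\theta \in \R$ for which the projections $\ell_\theta(z) := \re(e^{-i\theta}z)$ take $d$ distinct values at the distinct points $\iota(\ga_1),\dots,\iota(\ga_d)$; this excludes only finitely many $\theta$ modulo $\pi$. Let $\ga_+$ and $\ga_-$ be the (then unique) roots whose $\iota$-images realize the largest and smallest projection $R > r$, and use transitivity of $G$ to choose $\sigma,\tau \in G$ with $\sigma(\be_1)=\ga_+$ and $\tau(\be_1)=\ga_-$.

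Applying $\ell_\theta\circ\iota$ to the displayed relation and setting $\delta_i := \ell_\theta\iota\sigma(\be_i) - \ell_\theta\iota\tau(\be_i) \in [-(R-r),\,R-r]$ for $i\ge 2$, I get $k_1(R-r) = -\sum_{i\ge 2}k_i\delta_i$, whence
\[
|k_1|(R-r) = \Bigl|\sum_{i=2}^n k_i\delta_i\Bigr| \le \sum_{i=2}^n |k_i|\,|\delta_i| \le \Bigl(\sum_{i=2}^n |k_i|\Bigr)(R-r) \le |k_1|(R-r).
\]
Equality must therefore hold throughout, which (as every $k_i \ne 0$) forces $|\delta_i| = R-r$ for each $i\ge 2$; that is, each $\iota\sigma(\be_i)$ attains an extreme projection. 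But $R$ is attained only by $\ga_+ = \iota\sigma(\be_1)$, and $\be_i \ne \be_1$ gives $\sigma(\be_i)\ne \ga_+$, so necessarily $\ell_\theta\iota\sigma(\be_i)=r$, i.e.\ $\sigma(\be_i)=\ga_-$ for every $i\ge 2$. Since $n\ge 3$ there are at least two such indices, and $\be_2\ne\be_3$ then contradicts the injectivity of $\sigma$. This contradiction shows $s\notin K$. I expect the only genuine subtlety to be organizational rather than deep: making the descent to a complex embedding watertight for an arbitrary characteristic-zero $K$, and — crucially — ensuring that what is fed into $\C$ is the $s$-free subtracted relation (not a single embedding of the original identity), so that the unknown $s$ never has to be tracked. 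The complementary point of care is the generic choice of $\theta$, which guarantees that the extreme projections are attained uniquely and so converts the equality case of the triangle inequality into the combinatorial clash $\sigma(\be_2)=\sigma(\be_3)=\ga_-$.
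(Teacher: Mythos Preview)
The paper does not supply a proof of this lemma; it is quoted verbatim as Theorem~4 of \cite{dub} and used as a black box. So there is no in-paper argument to compare against.

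Your proof is correct. The strategy---subtract two Galois translates of the hypothetical identity to kill the unknown $s$, project the roots onto a generic real direction so that all projections are distinct, send $\be_1$ via transitivity to the extreme roots $\ga_+,\ga_-$, and let the equality case of the triangle inequality force $\sigma(\be_i)=\ga_-$ for every $i\ge 2$---is exactly the extremal argument underlying the original result in \cite{dub}. Two small remarks: first, when $|k_1|>\sum_{i\ge 2}|k_i|$ strictly, your chain of inequalities already gives $|k_1|(R-r)<|k_1|(R-r)$ without invoking $n\ge 3$; the hypothesis $n\ge 3$ is needed only in the borderline case $|k_1|=\sum_{i\ge 2}|k_i|$, which is precisely where your injectivity-of-$\sigma$ punchline lands. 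Second, your descent from an arbitrary characteristic-zero $K$ to a finitely generated $K_0\subseteq\C$ is the right device, and your emphasis that only the $s$-free subtracted relation (whose terms are differences of roots of $m$, hence lie in $M_0$) needs to be pushed into $\C$ is the clean way to sidestep any question of where $s$ itself embeds.
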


We shall also use the following:

\begin{lemma}\label{duu}
A polynomial $f(x)=x^2- x +c$, where $c \ne 1/4$ is a real number, has 
\begin{itemize}
\item[$(i)$] two non-real roots in $|z|<1$, if and only if $1/4<c<1$, 
\item[$(ii)$] two real roots in $(-1,1)$, if and only if $0<c<1/4$,
\item[$(iii)$] a real root in $(-1,1)$ and a real root in $(1,\infty)$, if and only if $-2<c<0$.
\end{itemize}
\end{lemma}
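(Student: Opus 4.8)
The plan is to dispatch all three parts by the elementary theory of real quadratics, using three inputs: the sign of the discriminant $\Delta = 1-4c$; the fact that the product of the two roots of $f$ equals its constant term $c$; and the values $f(1)=c$ and $f(-1)=2+c$ together with the location of the vertex of $f$ at $x=1/2$. The standing hypothesis $c\neq 1/4$ is exactly what lets me upgrade $\Delta\geq 0$ to $\Delta>0$ whenever it is needed.

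First I would handle part $(i)$. The polynomial $f$ has two non-real roots precisely when $\Delta=1-4c<0$, i.e. $c>1/4$; in that case the roots are a complex-conjugate pair $z,\bar z$, and since $z\bar z=|z|^2=c$ we get $|z|<1$ if and only if $c<1$. Combined with $c>1/4$ this yields the asserted range $1/4<c<1$, and there is no sign issue since $c>1/4>0$. For parts $(ii)$ and $(iii)$ the roots are real and distinct exactly when $c<1/4$ (using $c\neq 1/4$), so I write them as $r_-<r_+$ with $r_-+r_+=1$. For $(ii)$, both roots lie in $(-1,1)$ if and only if, the leading coefficient being positive, $f(1)>0$, $f(-1)>0$, and the vertex abscissa $1/2$ lies in $(-1,1)$; the last is automatic, while $f(1)=c>0$ together with $f(-1)=2+c>0$ (which follows from $c>0$) reduces to $c>0$, so $(ii)$ holds iff $0<c<1/4$. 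For $(iii)$, having $1$ strictly between $r_-$ and $r_+$ is equivalent to $f(1)<0$, i.e. $c<0$; then $f(x)\to+\infty$ as $x\to\infty$ forces $r_+\in(1,\infty)$, and $f(-1)=2+c>0$, i.e. $c>-2$, together with $f(1)<0$ forces $r_-\in(-1,1)$, while conversely those conclusions return $f(1)<0$ and $f(-1)>0$; hence $(iii)$ holds iff $-2<c<0$.

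There is no genuinely hard step here: the whole argument is a case analysis on $\Delta$ followed by the standard root-location criteria for a quadratic with positive leading coefficient. The only points requiring care are the bookkeeping of strict versus non-strict inequalities, which is made painless by the assumption $c\neq 1/4$, and making sure that both directions of each ``if and only if'' are checked in $(ii)$ and $(iii)$; this is immediate because in each case the geometric condition has been rewritten as an equivalent inequality on the single parameter $c$.
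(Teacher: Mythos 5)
Your proof is correct and follows essentially the same route as the paper: discriminant sign to separate the real and non-real cases, the product of the roots ($=c$) for part $(i)$, and the values $f(1)=c$, $f(-1)=2+c$ to locate the real roots in parts $(ii)$ and $(iii)$. The only cosmetic difference is that in $(ii)$ you invoke the sign-and-vertex criterion where the paper bounds the explicit roots $(1\pm\sqrt{1-4c})/2$ directly; the content is identical.
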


\begin{proof}
Evidently, the roots of $f(x) \in \R[x]$ are complex (non-real) if and only if  
$c>1/4$. Then they are complex conjugate roots and lie in $|z|<1$ if and only if
their product $c$ is less than $1$. Hence, $1/4<c<1$, as claimed in $(i)$. 
Similarly, the roots of $f(x)$ are real (and distinct) if and only if $c<1/4$.
Note that for $c<1/4$, one has $(1+\sqrt{1-4c})/2<1$ if and only if  $c>0$.  One also has
  $(1-\sqrt{1-4c})/2>-1$ in view of $c>-2$. This proves  $(ii)$.  Finally, $f(x)$
has a root in $(-1,1)$ and a root in $(1,\infty)$ whenever 
$f(-1)=2+c>0$ and $f(1)=c<0$. This is equivalent to
$-2<c<0$, as claimed in $(iii)$.  
\end{proof}

The next simple result is based on Kronecker's theorem.

\begin{lemma}\label{duu1}
The polynomials $x,x+1$ and $x^2+x-1$ are the only monic irreducible polynomials in $\Z[x]$ that have all their roots in the interval $(-2,1)$. 
\end{lemma}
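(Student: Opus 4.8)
The plan is to derive the statement from Kronecker's theorem (an algebraic integer all of whose conjugates over $\Q$ lie in the closed unit disc $|z|\leq 1$ is either $0$ or a root of unity) by means of the standard substitution $x = z + z^{-1}$, which carries the interval $(-2,2) \supset (-2,1)$ onto the unit circle.

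So let $f$ be a monic irreducible polynomial in $\Z[x]$ of degree $n \geq 1$ whose roots $\be_1,\dots,\be_n$ all lie in $(-2,1)$, and set
\[
g(z) = z^n f(z + z^{-1}).
\]
Expanding, $g$ is a monic reciprocal polynomial of degree $2n$ with integer coefficients, its leading term being that of $(z^2+1)^n$. For each $j$, the equation $z + z^{-1} = \be_j$, i.e. $z^2 - \be_j z + 1 = 0$, has discriminant $\be_j^2 - 4 < 0$, so its two roots form a complex conjugate pair of modulus $1$; hence all $2n$ roots of $g$ lie on $|z| = 1$. Since the minimal polynomial of any root $z_0$ of $g$ divides the monic integer polynomial $g$, every conjugate of $z_0$ is again a root of $g$ and so lies on the unit circle; by Kronecker's theorem $z_0$ is therefore a root of unity. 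Consequently each $\be_j$ equals $z_0 + z_0^{-1} = 2\cos(2\pi t_j)$ for some rational $t_j$. In particular $\be := \be_1 = 2\cos(2\pi k/m)$ for integers $k,m$ with $m \geq 1$ and $\gcd(k,m) = 1$, and since $f$ is the minimal polynomial of $\be$ over $\Q$, it is the minimal polynomial of $2\cos(2\pi/m)$, which has degree $\varphi(m)/2$ when $m \geq 3$.

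It then remains to decide for which $m$ all conjugates of $2\cos(2\pi/m)$ lie in $(-2,1)$. The cases $m = 1,2$ give $\be = \pm 2 \notin (-2,1)$, so $m \geq 3$, and then the conjugates are exactly the numbers $2\cos(2\pi j/m)$ with $1 \leq j \leq m/2$ and $\gcd(j,m)=1$. All of these exceed $-2$ (equality would force $j/m = 1/2$, impossible for $j$ coprime to $m \geq 3$), while the largest of them is $2\cos(2\pi/m)$, since $j = 1$ always occurs; the constraint $2\cos(2\pi/m) < 1$ is equivalent to $2\pi/m > \pi/3$, i.e. $m \leq 5$. A direct inspection of the three remaining values finishes the proof: $m = 3$ gives $\be = -1$ with minimal polynomial $x+1$; $m = 4$ gives $\be = 0$ with minimal polynomial $x$; and $m = 5$ gives the pair $\be = (-1\pm\sqrt5)/2$, with sum $-1$ and product $-1$, hence minimal polynomial $x^2+x-1$; conversely one checks that the roots of these three polynomials do lie in $(-2,1)$. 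There is no serious obstacle here; the one place to be careful is verifying that $g(z)=z^n f(z+z^{-1})$ really is a monic integer polynomial of degree $2n$ with every root on the unit circle, so that Kronecker's theorem applies, together with the observation that the boundary values $\pm 2$ and $1$ are ruled out precisely because the interval $(-2,1)$ is open.
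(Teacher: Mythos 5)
Your proof is correct and follows essentially the same route as the paper: both arguments reduce to Kronecker's theorem to conclude that every root is of the form $2\cos(2\pi j/m)$ with $j/m$ rational, observe that the largest conjugate is then $2\cos(2\pi/m)$ and must be less than $1$, and check the surviving small values of $m$. The only difference is that you derive the interval form of Kronecker's theorem from the unit-circle version via the substitution $x=z+z^{-1}$, whereas the paper simply cites it.
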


\begin{proof} It is clear that there are only two such polynomials $f(x)$ of degree $1$, namely, $x$ and $x+1$. Assume that $\deg{f(x)} \geq 2$.   By Kronecker's theorem (see Theorem 2.5 in \cite{nar}),
$f(x)$ must have a root of the form $2\cos(\pi r)$ with $r \in \Q$. 
Then the largest root of $f(x)$ must be of the form $2\cos (2\pi/D)$ for some $D \in \N$. 
Note that this number is rational for $D=1,2,3,4,6$. Thus, $D \geq 5$ 
in view of $\deg{f(x)} \geq 2$ and the irreducibility of $f(x)$ in $\Z[x]$.  On the other hand, if $D \geq 7$ then $2\cos(2\pi/D) > 1$. Therefore, the only possibility is $D=5$ when the largest root of $f(x)$ is equal to $2\cos(2\pi/5)=2\cos 72^{\circ}=(\sqrt{5}-1)/2.$ 
Consequently, $\deg{f(x)}=2$ and the other root of $f(x)$ must be $(-\sqrt{5}-1)/2$.
Therefore, $$f(x)= (x-(\sqrt{5}-1)/2)(x-(-\sqrt{5}-1)/2)=x^2+x-1,$$ as claimed.
\end{proof}

We conclude Section \ref{aux} with a technical lemma on resultants that will be used in our computer calculations.

\newpage

\begin{lemma}\label{resultant}
Let $f(x) \in \Q[x]$ be the minimal polynomial of an algebraic number $\al$ of degree $d\geq 4$, whose conjugates over $\Q$ are $\al_1$, $\al_2$, $\dots$, $\al_d$. Suppose that $f(x)$ and $f(-x)$ have no common roots. Define the polynomials $g(x) \in \Q[x]$ and $h(x) \in \Q[x]$ by
\[
g(x) := \Res_y\left[f(x-y), f(y)\right], \qquad h(x) := \Res_y\left[f(x+y), f(y)\right]
\] Then, for some four distinct integers $1 \leq i, j, k, l \leq d$,
\begin{enumerate}[i)]
\item the relation $\al_i = \al_j+\al_k$ holds if and only if $f(x)$ divides $g(x)$ and $h(x)$;
\item the relation $\al_i+\al_j+\al_k=0$ holds if and only if $f(-x)$ divides $g(x)$;
\item the relation $\al_i+\al_j=\al_k + \al_l$ holds if and only if $g(x)$ has a factor of multiplicity $\geq 4$  in $\Q[x]$, or, alternatively, $h(x)$ has a factor of multiplicity $ \geq 2$ that is not a power of $x$.
\item the relation $\al_i = \al_j + \al_k + \al_l$ holds if and only if $g(x)$ and $h(x)$ has a common factor in $\Q[x]$.
\item the relation $\al_i + \al_j + \al_k + \al_l = 0$ holds if and only if $g(x)$ and $g(-x)$ has a common factor in $\Q[x]$.
\end{enumerate}
\end{lemma}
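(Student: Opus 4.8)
The plan is to translate each additive relation among the conjugates into a divisibility or common-factor statement about the two resultant polynomials $g$ and $h$, using the standard fact that, for polynomials $p,q$ with roots $\{p_i\}$, $\{q_j\}$, one has $\Res_y[p(x-y),q(y)] = c\prod_{i,j}(x - (p_i+q_j))$ up to a nonzero constant, and similarly $\Res_y[p(x+y),q(y)] = c\prod_{i,j}(x-(p_i-q_j))$. Thus the roots of $g$ are exactly the $\binom{d}{1}^2 = d^2$ sums $\al_i+\al_j$ (with multiplicity), and the roots of $h$ are exactly the $d^2$ differences $\al_i-\al_j$. Note $g$ always has $d$ roots equal to $2\al_i$ (from $i=j$) and $h$ always has the root $0$ with multiplicity exactly $d$ (here the hypothesis that $f(x)$ and $f(-x)$ share no roots is used: it guarantees $\al_i - \al_j \ne 0$ for $i \ne j$, so the multiplicity of $0$ in $h$ is precisely $d$, and more generally rules out spurious coincidences like $\al_i + \al_j = 0$ being forced). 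The whole proof is then a sequence of short ``roots of $f$ (or $f(-x)$, or a repeated factor) appear among the roots of $g$ (or $h$) if and only if the corresponding relation holds'' arguments, combined with the observation that since $f$ is \emph{irreducible}, $f \mid g$ is equivalent to \emph{some} root of $f$ being a root of $g$, i.e.\ $\al_m = \al_i + \al_j$ for some indices, and Galois conjugation then produces the relation for a full orbit.

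Concretely I would argue as follows. For (i): $\al_i = \al_j+\al_k$ says $\al_i$ is a root of $g$, and $\al_i = \al_j + \al_k$ also gives $\al_i - \al_j = \al_k$, i.e.\ $\al_i$ is a root of $h$; since $f$ is the minimal polynomial of $\al_i$, having one common root with an irreducible $f$ forces $f \mid g$ and $f \mid h$. Conversely, if $f \mid g$ and $f \mid h$, then $\al_i$ is a sum $\al_j + \al_k$ \emph{and} a difference $\al_p - \al_q$; one has to promote this to $\al_i = \al_j + \al_k$ with $i,j,k$ \emph{distinct} and rule out the degenerate case $\al_i = 2\al_j$ (i.e.\ $j=k$) --- but $\al_i = 2\al_j$ with $i \ne j$ contradicts Lemma~\ref{smmm} (take $\al_1 = \al_j$, $2\al_1 = \al_i + \al_i$, forcing $\al_i = \al_j$), and similarly the difference representation rules out $\al_i = \al_j + \al_i$. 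For (ii): $\al_i+\al_j+\al_k=0$ means $\al_i + \al_j = -\al_k$, so $-\al_k$ (a root of $f(-x)$) is a root of $g$; since $f(-x)$ is irreducible this forces $f(-x) \mid g$; conversely $f(-x) \mid g$ gives some $-\al_k = \al_i + \al_j$, and distinctness of $i,j,k$ follows since $-\al_k = 2\al_i$ would contradict Lemma~\ref{smmm} (it gives $\al_i = -\al_k$, then $\al_k = -\al_k$, then $\al_k = 0$, impossible for $d \ge 2$), while $-\al_k = \al_i + \al_k$ gives $\al_i = -2\al_k$, again contradicting Lemma~\ref{smmm}.

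For (iii): if $\al_i+\al_j = \al_k+\al_l$ with $i,j,k,l$ distinct, then the value $s := \al_i+\al_j$ is a root of $g$ coming from the \emph{four} distinct unordered-with-order pairs $(i,j),(j,i),(k,l),(l,k)$, hence $s$ is a root of $g$ of multiplicity $\ge 4$; its minimal polynomial over $\Q$ then divides $g$ with multiplicity $\ge 4$ (all conjugates of $s$ inherit at least this multiplicity by Galois action), giving a factor of multiplicity $\ge 4$ in $\Q[x]$. For the $h$ formulation: $\al_i+\al_j=\al_k+\al_l$ rewrites as $\al_i-\al_k = \al_l-\al_j$, a common value $t \ne 0$ (since $i \ne k$, using the no-common-root hypothesis) arising from the two ordered pairs $(i,k)$ and $(l,j)$, which are distinct because $i \ne l$ (else $j = k$); so $t$ is a root of $h$ of multiplicity $\ge 2$, and its minimal polynomial is not $x$ since $t \ne 0$, yielding a repeated non-$x$ factor of $h$. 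The converses require care: a high-multiplicity factor of $g$ must be shown to \emph{only} arise from a genuine relation $\al_i + \al_j = \al_k + \al_l$ and not from coincidences already excluded (e.g.\ $2\al_i = \al_j + \al_k$, forbidden by Lemma~\ref{smmm}; or $2\al_i = 2\al_j$ forcing $i=j$; or $\al_i + \al_j = 2\al_k$ with $\{i,j\}\ne\{k\}$, again Lemma~\ref{smmm}) --- this bookkeeping, sorting which multiplicities in $g$ and $h$ are ``trivial'' and which force a four-term relation, is the main obstacle. For (iv): $\al_i = \al_j+\al_k+\al_l$ rewrites both as $\al_i - \al_l = \al_j + \al_k$ (a common value of $h$ at argument... more precisely $\al_i$ appears in $g$ via $\al_j + (\al_k+\al_l)$? no) --- rather, $\al_i - \al_j = \al_k + \al_l$ exhibits a common root of $h$ and $g$ (it is $\al_i - \al_j$, a root of $h$, equal to $\al_k+\al_l$, a root of $g$), so $g$ and $h$ share the root $\al_i-\al_j$ and hence (passing to its minimal polynomial) a common $\Q[x]$-factor; conversely a common factor gives $\al_p - \al_q = \al_r + \al_s$, and one disentangles the index coincidences (ruling out $\al_p - \al_q = 2\al_r$ and the like via Lemma~\ref{smmm}, and $p = q$ via the no-common-root hypothesis) to recover $\al_i = \al_j + \al_k + \al_l$ with distinct indices. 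For (v): $\al_i+\al_j+\al_k+\al_l=0$ rewrites as $\al_i + \al_j = -(\al_k+\al_l)$, so the value $\al_i+\al_j$ is a root of $g$ and $-(\al_i+\al_j) = \al_k + \al_l$ is also a root of $g$, i.e.\ $g$ and $g(-x)$ share the root $\al_i+\al_j$, hence a common $\Q[x]$-factor; conversely a common factor of $g$ and $g(-x)$ gives $\al_i+\al_j = -(\al_k+\al_l)$ for some indices, and the degenerate possibilities ($i=j$, giving $2\al_i + \al_k + \al_l = 0$, i.e.\ $2\al_i = -\al_k - \al_l$, forbidden by Lemma~\ref{smmm}; or $\{i,j\} = \{k,l\}$, giving $2(\al_i+\al_j)=0$, impossible) are excluded, leaving the genuine four-term relation. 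In each converse the same small toolkit --- Lemma~\ref{smmm} to kill $2\al_i = \al_j+\al_k$ type degeneracies, and the $f(x),f(-x)$ coprimality hypothesis to kill $\al_i = \al_j$ and $\al_i = -\al_j$ degeneracies --- does the work, so once the first one or two cases are written out carefully the rest are routine variations.
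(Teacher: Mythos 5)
Your overall strategy is the same as the paper's: expand $g$ and $h$ as products over the pairwise sums $\al_i+\al_j$ and differences $\al_i-\al_j$, peel off the diagonal contributions $(-2)^d f(x/2)$ and $x^d$, and use Smyth's lemma plus the hypothesis $\gcd(f(x),f(-x))=1$ to show that the only root coincidences that survive are genuine relations with distinct indices. The skeleton and the forward directions of all five parts are fine. But several of your degenerate-case exclusions are justified by forcing relations into Lemma~\ref{smmm} when it does not apply, and these are exactly the steps that carry the converses. Concretely: (a) in (i) you claim $\al_i=2\al_j$ ($i\ne j$) contradicts Lemma~\ref{smmm} via ``$2\al_j=\al_i+\al_i$'' --- but $\al_i=2\al_j$ is \emph{not} the statement $2\al_j=\al_i+\al_i$ (the latter reads $\al_j=\al_i$), and Smyth's lemma says nothing about a two-term relation $2\al_j=\al_i$; the correct exclusion is elementary (if $f(x)\mid f(x/2)$ then $f(x/2)=2^{-d}f(x)$, and comparing constant terms using $f(0)\ne 0$ gives $2^{-d}=1$). (b) In (ii) the step ``$-\al_k=2\al_i$ gives $\al_i=-\al_k$'' is a non sequitur; again a norm or coefficient comparison is needed. (c) In (v) you exclude $2\al_i=-\al_k-\al_l$ by Lemma~\ref{smmm}, but $-\al_k,-\al_l$ are not conjugates of $\al_i$, so Smyth's lemma is silent; this is precisely where the paper invokes Lemma~\ref{dubickas} (the form $2\al_i+\al_k+\al_l$ has dominant coefficient $2\ge 1+1$, hence cannot be rational, let alone zero).

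The second, smaller issue is that you leave the converse of (iii) as ``the main obstacle.'' It is in fact routine once the coprimality facts are recorded: since $f(x/2)$ is separable and coprime to $u(x):=\prod_{i<j}(x-\al_i-\al_j)$ (by Lemmas~\ref{smmm} and~\ref{dubickas}), every irreducible factor of $g=(-2)^df(x/2)\,u(x)^2$ has multiplicity at most $1+2m$ with $m$ its multiplicity in $u$; so multiplicity $\ge 4$ forces $m\ge 2$, i.e.\ $\al_i+\al_j=\al_k+\al_l$ with $\{i,j\}\ne\{k,l\}$, and any shared index would force two conjugates of $\al$ to coincide. With the exclusions in the previous paragraph repaired (replacing the misapplications of Lemma~\ref{smmm} by Lemma~\ref{dubickas} or a coefficient comparison, as the paper does), your argument coincides with the paper's proof.
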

\begin{proof} Write $f(x) = \prod_{i=1}^{d} (x-\al_i)$. 
Then the roots of $f(x-y)$ and $f(y)$ with respect to $y$ are $x-\al_i$, $1 \le i \le d$, and $\al_j$, $1 \le j \le d$, 
respectively, so that
\[
g(x) = \Res_y\left[f(x-y), f(y)\right] = (-1)^d\prod_{i=1}^d \prod_{j=1}^d (x-\al_i-\al_j)=
\]
\[
=(-1)^d\prod_{i=1}^{d} (x-2\al_i) \cdot \prod_{1\leq i < j \leq d}(x-\al_i-\al_j)^2 = (-2)^d f(x/2) \cdot u^2(x),
\]
where $u(x) := \prod_{1 \leq i < j \leq d}^d(x-\al_i-\al_j) \in \Q[x]$.

\noindent In a similar way,
\[
h(x) = \Res_y\left[f(x+y), f(y)\right] = \prod_{i=1}^d \prod_{j=1}^d (x+\al_i-\al_j)=
\]
\[
= x^d\cdot \prod_{\substack{i, j = 1\\i \ne j}}^d(x+\al_i-\al_j) = x^d\cdot \prod_{1 \leq i < j \leq d}(x-(\al_j-\al_i))(x+(\al_j-\al_i))= x^d v(x^2),
\]
where $v(x):= \prod_{1 \leq i < j \leq d}^d(x-(\al_i-\al_j)^2) \in \Q[x]$.

Observe that $u(x)$ has no common factors with $f(x/2)$ or $f(-x/2)$; for otherwise, one would have $\pm 2\al_i = \al_j + \al_k$ for $j < k$. Since $f(x)$ has no multiple roots, one has $\al_j \ne \al_k$, so this contradicts Lemmas \ref{smmm} and \ref{dubickas}. In addition to this, $u(x)$ has no factor that is a power of $x$. This is because $\al_i + \al_j=0$, $i < j$, yields $\al_i = -\al_j$ and violates the condition $\gcd\left[f(x), f(-x)\right]=1$ of Lemma \ref{resultant}.

Similarly, $f(x/2)$ has no common factors with $x^d$, since $f(0) \ne 0$, by the condition $\gcd[f(x), f(-x)] = 1$. Also, $f(x/2)$ has no common factors with $v(x^2)$, for otherwise one would have $2\al_k = \al_j - \al_i$ or $\al_k = -\al_i$, in violation of Lemma \ref{smmm}, Lemma \ref{dubickas} or the condition $\gcd\left[f(x), f(-x)\right]=1$. 

For  $(i)$, observe that the equality $\al_i = \al_j + \al_k$ in conjugates of $\al$ is equivalent to the fact that $f(x)$ and $u(x)$ has a common factor in $\Q[x]$. Therefore, $f(x) \mid u(x)$, since $f(x)$ is irreducible in $\Q[x]$. Since $f(x)$ and $2^df(x/2)$ are relatively prime in $\Q[x]$, this is equivalent to $f(x) \mid g(x)$. Alternatively, one can rewrite $\al_i = \al_j + \al_k$ as $\al_k = \al_i - \al_j$, and notice that this equation is equivalent to the fact that $f(x) \mid h(x)$.

In $(ii)$, $-\al_i = \al_j + \al_k$ is equivalent to $f(-x) \mid g(x)$.

In $(iii)$, the relation $\al_i + \al_j = \al_k + \al_l$, where $\{i, j\} \ne \{k, l\}$ is equivalent to the fact that $u(x)$ has a square factor $(x-\al_i-\al_j)^2$. Since $f(x/2)$ has no multiple roots, this is equivalent to the fact that $g(x)$ contains factors of multiplicity $\geq 4$ in $\Q[x]$. Alternatively, looking into this relation as $\al_i - \al_k = \al_l - \al_j$, one sees that it is equivalent to $h(x)$ being divisible by a square factor in $\Q[x]$ that is not equal to $x^2$.

In $(iv)$, the equation $\al_i -\al_j = \al_k + \al_l$ is equivalent to $u(x)$ and $v(x^2)$ having a common factor in $\Q[x]$. By co-primality of $f(x/2)$ and $x^d$, $f(x/2)$ and $v(x^2)$, $u(x)$ and $x^d$ established earlier in the proof, this is equivalent to the fact that $g(x)$ and $h(x)$ has a common factor in $\Q[x]$. 

Finally, for part $(v)$, $\al_i + \al_j = -(\al_k + \al_l)$ is equivalent to $u(x)$ and $u(-x)$ having a common factor in $\Q[x]$. By mutual co-primality of $u(x)$, $f(x/2)$, $f(-x/2)$, this is equivalent to $g(x)$ and $g(-x)$ having a common factor in $\Q[x]$.
\end{proof}
\section{Proof of Theorem~\ref{du1}}\label{du_d7}

Assume that there exists a Pisot number $\al$ whose conjugates 
satisfy 
\begin{equation}\label{lin3}
\al_1=\al_2+\al_3 \quad \text{ or } \quad \al_1 + \al_2 + \al_3 = 0.
\end{equation} 
Clearly, in both equations all three conjugates $\al_1, \al_2, \al_3$ must be distinct, so we have $d \geq 3$.

First, we will show that $\al < 2$. For this, let $\sigma$ be the Galois automorphism that maps $\al_1$ into the Pisot number $\al$. From \eqref{lin3}, one obtains
\[
\al = \sigma(\al_2) + \sigma(\al_3) \quad \text{ or } \quad \al = -\sigma(\al_2)-\sigma(\al_3).
\] Since $\sigma$ is a bijection, both $\sigma(\al_2)$ and $\sigma(\al_3)$ must be $ \leq 1$ in absolute value. Hence, in both cases, 
\[
\al=|\al| \leq |\sigma(\al_2)|+|\sigma(\al_3)|<1+1=2.
\] 
The second step is to obtain an upper bound $d= \deg \al \leq 8$.

Assume that the first relation in \eqref{lin3} holds.  We apply Lemma~\ref{sammm} to
\[
\be_1:=\al_1, \quad \be_2:=-\al_2, \quad \be_3:=-\al_3 \text{ and } N:=0.
\]
One needs to verify the condition \eqref{patikr}. Indeed, if 
\[
\be_1^{-1}+\be_2^{-1}+\be_3^{-1}= \al_1^{-1}-\al_2^{-1}-\al_3^{-1}=0.
\]
Then one has $\al_1^{-1} = \al_2^{-1}+\al_3^{-1}$, or $\al_2\al_3 = \al_1 (\al_2 + \al_3)$. On the other hand, we have $\al_1 = \al_2 + \al_3$. This yields
\[
\al_2 \al_3=\al_1(\al_2+\al_3)=\al_1^2.
\]
However, the non-trivial multiplicative relation $\al_1^2 = \al_2 \al_3$ contradicts Lemma~\ref{mignot}. Hence, the condition of Lemma \ref{sammm} is satisfied. Now, by Lemma~\ref{sammm}, we obtain
\begin{align*}
\frac{3\log \al}{d} &=3h(\al)  =h(\al_1)+h(-\al_2)+h(-\al_3) = \\&=h(\be_1)+h(\be_2)+h(\be_3) 
 \geq \frac{1}{2} \log \left(\frac{1+\sqrt{5}}{2}\right),
\end{align*}
since $h(\ga)=h(-\ga)$. Therefore,
\begin{equation}\label{bound1}
\frac{3\log{\al}}{d} \geq \frac{1}{2}\log{\left(\frac{1+\sqrt{5}}{2}\right)}.
\end{equation} By combining the inequality \eqref{bound1} with the estimate $1<\al < 2$, one obtains
\begin{equation}\label{bound_d8}
d \le \frac{6 \log 2}{\log \left(\frac{1+\sqrt{5}}{2}\right)}=8.64252\dots.
\end{equation} Consequently, $d$ can only take the values $3,4,5,6,7,8$.

The last step is to rewrite inequality \eqref{bound1} as
\begin{equation}\label{interv3}
\left(\frac{1+\sqrt{5}}{2}\right)^{d/6} \leq \al < 2, \qquad 3 \leq d \leq 8.
\end{equation}

Inequality \eqref{interv3} also holds for any Pisot number $\al$ whose conjugates satisfy the second linear equation in \eqref{lin3} instead of the first one. This can be proved by the application of Lemma \ref{sammm} to the linear relation $\al_1+\al_2+\al_3=0$ exactly in the same manner as we did it for the equation $\al_1=\al_2+\al_3$, by setting $\be_j:=\al_j$ for $j=1,2,3$ and then deriving \eqref{bound1}, \eqref{bound_d8} and \eqref{interv3}.  


We further refine the list of candidates. First we consider the possible solutions to the linear equation $\al_1 + \al_2 + \al_3 = 0$. We will show that $d \in \{3, 6\}$. Indeed, in prime degree cases $d \in \{5, 7\}$, the linear relation $\al_1+\al_2+\al_3=0$ contradicts Lemma \ref{kurbat}. Now, let $t \in \Q$ be the sum of all distinct conjugates of $\al$ over $\Q$. In case $d=4$, the trace formula  $t=\al_1+\al_2+\al_3+\al_4$ together with $\al_1+\al_2+\al_3 = 0$ implies $\al_4 \in \Q$, which is impossible. 

Assume next that $d=8$. Put $t$ for the trace of $\al$. Let $N$ be  the number of distinct equalities
$\al_i+\al_j+\al_k=0$ obtained by applying all automorphisms 
of the Galois group $\text{Gal}(\Q(\al)/\Q)$ to $\al_1+\al_2+\al_3=0$. 
Then each $\al_i$ occurs exactly $\ell$ times in these equalities, so that 
$3N=8\ell$. In particular, $\ell$ must be divisible by $3$, so $\ell \geq 3$. Note that the intersection of two distinct sets of indices $\{i,j,k\}$ and $\{i',j',k'\}$ satisfying
$\al_i+\al_j+\al_k=0$ and $\al_{i'}+\al_{j'}+\al_{k'}=0$ is either empty or contains at most one element. So, by considering the equalities of the form
$\al_1+\al_i+\al_j=0$ (there are at
 least three such equalities), we find that after re-indexing the conjugates
the following three equalities
 $$\al_1+\al_2+\al_3=0, \>\> \al_1+\al_4+\al_5=0,\>\> \al_1+\al_6+\al_7=0$$ hold. By adding these equalities, we obtain  
 \[
 2\al_1+\al_1+\al_2+\dots+\al_7=2\al_1+t-\al_8=0,
 \] so that $2\al_1 - \al_8 = k \in \Q$. However, this contradicts to Lemma~\ref{dubickas}. Therefore, $d \in \{3, 6\}$. The minimal polynomials of Pisot numbers $\al$ of degree $3$ and degree $6$ satisfying \eqref{interv3} are listed in Tables \ref{pisot_d3} and \ref{pisot_d6}, respectively. In Section \ref{calc}, we will give a detailed account on how these polynomials were calculated. The number $\tau$ stands for the golden section number $(1+\sqrt{5})/2$.  

\begin{table}\caption{All Pisot numbers $\al \in \left(\tau^{1/2}, 2 \right)$ of degree $d=3$}
\begin{tabular}{l l l}
\toprule
no.& $\al$ & Minimal polynomial of $\al$\\
\midrule
1. & $1.32471\dots$ & $x^3 - x - 1$\\
2. & $1.46557\dots$ & $x^3 - x^2 - 1$\\
3. & $1.75487\dots$ & $x^3 - 2x^2 + x - 1$\\
4. & $1.83928\dots$ & $x^3 - x^2 - x - 1$\\
\bottomrule
\end{tabular}\label{pisot_d3}
\end{table}

\begin{table}\caption{All Pisot numbers $\al \in \left(\tau, 2 \right)$ of degree $d=6$}
\begin{tabular}{l l l}
\toprule
no. & $\al$ & Minimal polynomial of $\al$\\
\midrule
1. & $1.71428\dots$ & $x^6 - x^5 - x^4 - x^2 + 1$\\
2. & $1.80750\dots$ & $x^6 - x^5 - x^4 - x^3 + 1$\\
3. & $1.91616\dots$ & $x^6 - 2x^5 + x^4 - 2x^3 + x^2 - x + 1$\\
4. & $1.98138\dots$ & $x^6 - x^5 - x^4 - 2x^3 + 1$\\
5. & $1.66040\dots$ & $x^6 - x^5 - x^3 - x^2 - 1$\\
6. & $1.96716\dots$ & $x^6 - 2x^5 + x - 1$\\
7. & $1.74370\dots$ & $x^6 - x^5 - x^4 - x - 1$\\
8. & $1.78711\dots$ & $x^6 - x^4 - 2x^3 - 2x^2 - 2x - 1$\\
9. & $1.80509\dots$ & $x^6 - 2x^5 + x^4 - x^3 - 1$\\
10. & $1.89382\dots$ & $x^6 - 2x^5 + x^2 - 1$\\
11. & $1.91118\dots$ & $x^6 - x^5 - x^4 - x^3 - x - 1$\\
12. & $1.95545\dots$ & $x^6 - 3x^5 + 3x^4 - 2x^3 + x - 1$\\
13.  & $1.97947\dots$ & $x^6 - x^5 - 2x^4 + x^2 - x - 1$\\
14. & $1.98358\dots$ & $x^6 - x^5 - x^4 - x^3 - x^2 - x - 1$\\
\bottomrule
\end{tabular}\label{pisot_d6}
\end{table}
By searching Table \ref{pisot_d3} for polynomials with the trace $0$, we see that only the roots $\{\al_1, \al_2, \al_3\}$ of the polynomial $x^3-x-1$ satisfy the equation $\al_1 + \al_2 + \al_3 = 0$. A quick computation with \texttt{Maple} computer algebra package revealed that no polynomial $f(x)$ of degree $6$ in Table \ref{pisot_d6} satisfies the condition in part $(ii)$ of Lemma \ref{resultant}. Hence, there are no more solutions to the equation $\al_1+\al_2+\al_3 = 0$. An alternative way to check the polynomials of degree $6$ in Table \ref{pisot_d6} without computer is to apply (a more general) Theorem 1.2 from \cite{DJ}.

Now let us consider the linear relation $\al_1 = \al_2 + \al_3$ in conjugates of a Pisot number $\al$. Note that, by Lemma~\ref{kurbat}, $d \ne 3,5,7$, so it remains to consider the cases $d=4,6,8$. 
Consider an automorphism $\sigma$ of the Galois group $\gal(\Q(\al)/\Q)$
that maps $\al_1$ to $\al_2$. Setting $\sigma(\al_2)=\al_k$ and $\sigma(\al_3)=\al_l$, we obtain $\al_1=\al_2+\al_3$ and $\al_2=\al_k+\al_l$. We claim that $k,l>3$. Indeed, otherwise, as 
$\al_k$ and $\al_l$ are distinct,
$k \ne l$, assuming without loss of generality that $k<l$, we must have $1 \leq k \leq 3$. Clearly, $k \ne 2$. If $k=1$ then, by adding both equalities, we obtain
$\al_3+\al_l=0$, which is impossible 
(otherwise, $-\al$ is a conjugate of $\al$, so $\al$ is not a Pisot number). If $k=3$ then, by subtracting $\al_2=\al_3+\al_l$ from $\al_1=\al_2+\al_3$, we find that  
$2\al_2=\al_1+\al_l$, which contradicts Lemma~\ref{smmm}.  Thus, without restriction of generality, we may assume that $k=4$ and $l=5$, namely,
\begin{equation}\label{dvilyg}
\al_1=\al_2+\al_3 \>\>\> \text{and} \>\>\> \al_2=\al_4+\al_5.
\end{equation}
 In particular, this shows that
$d \geq 5$, so $d$ cannot be $4$.  

Assume next that $d=6$ and put $t$ for the sum of all $6$ conjugates of $\al$. Then, by adding both equalities in \eqref{dvilyg}, we find that
$$\al_1+\al_2=\al_2+\al_3+\al_4+\al_5=t-\al_1-\al_6.$$  Thus, $2\al_1+\al_2+\al_6=t \in \Z$,
which contradicts Lemma~\ref{dubickas} with $n=3$ and $K=\Q$. 

Hence, it remains to solve the equation $\al_1=\al_2+\al_3$ in conjugates of Pisot numbers of degree $d = 8$ that, by inequalities \eqref{interv3}, are restricted to the interval $(\tau^{4/3}, 2)$. All candidates are listed in Table \ref{pisot_d8} (see also Section \ref{calc}). For each of the numbers $\al$ given in Table \ref{pisot_d8}, we have checked the conditions of Lemma \ref{resultant} $(i)$ with Maple and verified that $\al_1 \ne \al_2 + \al_3$. Therefore, the equation $\al_1 = \al_2+\al_3$ is not solvable in conjugates of a Pisot number. This completes the proof of Theorem \ref{du1}.

\begin{table}\caption{All Pisot numbers $\al \in \left(\tau^{4/3}, 2 \right)$ of degree $d=8$.}
\begin{tabular}{l l l}
\toprule
no. & $\al$ & Minimal polynomial\\
\midrule
1. & $1.94284\dots$ & $x^8 - 2x^7 + x^4 - x^3 - x + 1$\\
2. & $1.96113\dots$ & $x^8 - x^7 - x^6 - x^5 - x^4 - x^3 + 1$\\
3. & $1.92172\dots$ & $x^8 - x^7 - x^6 - x^5 - x^4 + 1$\\
4. & $1.94653\dots$ & $x^8 - 2x^7 + x^5 - 2x^4 + x^3 - x + 1$\\
5. & $1.99577\dots$ & $x^8 - x^7 - x^6 - x^5 - 2x^4 + 1$\\
6. & $1.92600\dots$ & $x^8 - x^7 - 2x^6 + x^4 - x^2 + 1$\\
7. & $1.99203\dots$ & $x^8 - 2x^7 + x - 1$\\
8. & $1.97061\dots$ & $x^8 - x^7 - x^6 - x^5 - x^4 - x^3 - 1$\\
9. & $1.91743\dots$ & $x^8 - 3x^7 + 3x^6 - 2x^5 + 2x^3 - 3x^2 + 2x - 1$\\
10. & $1.90988\dots$ & $x^8 - x^7 - x^6 - x^5 - x^4 + x + 1$\\
11. & $1.91580\dots$ & $x^8 - 2x^7 + x^5 - x^4 - x^3 + x^2 - 1$\\
12. & $1.96225\dots$ & $x^8 - x^7 - 2x^6 + x^4 - x^3 - x^2 + x + 1$\\
13. & $1.97526\dots$ & $x^8 - 2x^7 + x^2 - 1$\\
14. & $1.99402\dots$ & $x^8 - x^7 - 2x^6 - x^5 + x^4 + 2x^3 + x^2 - x - 1$\\
15. & $1.93167\dots$ & $x^8 - 2x^7 - x^6 + 3x^5 - x^4 - 2x^3 + 2x^2 - 1$\\
16. & $1.91451\dots$ & $x^8 - x^7 - 2x^6 - x^5 + 2x^4 + 2x^3 - x - 1$\\
17. & $1.93895\dots$ & $x^8 - 2x^7 + x^3 - 1$\\
18. & $1.95731\dots$ & $x^8 - 2x^6 - 3x^5 - 2x^4 + 2x^2 + 2x + 1$\\
19. & $1.98707\dots$ & $x^8 - 3x^7 + 2x^6 + x^5 - 2x^4 + x - 1$\\
20. & $1.99603\dots$ & $x^8 - x^7 - x^6 - x^5 - x^4 - x^3 - x^2 - x - 1$\\
\bottomrule
\end{tabular}\label{pisot_d8}
\end{table}

\smallskip

\noindent \emph{Remark 1.} Combinatorial arguments may be used to prove that the equation $\al_1 \ne \al_2 + \al_3$ has no solutions in conjugate algebraic numbers of degree $8$.  However, the proof is much longer and it is not practical to give it here. For details, refer to \cite{DJ} again.
\smallskip

\noindent \emph{Remark 2 .} Another way to prove the non-existence of solutions of the equation $\al_1 = \al_2 + \al_3$ when $d=8$  in Theorem \ref{du1} is to use  Theorem~\ref{viens}. Indeed, by mapping $\al_1$ to $\al_j$
for $j=2,3,\dots,8$, we obtain eight equalities of the form
$$\al_j=\al_{k(j)}+\al_{l(j)}$$ with $j=1,2,\dots,8$ and $k(j)< l(j)$ lying in the set $\{1,2,\dots,8\} \setminus \{j\}$. At least one of these indices, say,
$k(1)=2$ appears at least twice on the right hand side of these eight equalities, namely, $\al_1=\al_2+\al_3$ and $\al_j=\al_2+\al_l$, where $j \ne 1$ and so $l \ne 3$. Subtracting the first equality from the second we find that $\al_1-\al_j=\al_3-\al_l$. Thus, $\al_1+\al_l=\al_3+\al_j$. By Theorem~\ref{viens}, this is only possible when $d=4$, contrary to $d=8$.

We must stress, however, that from the computational point of view, there is not much sense in proving Theorem \ref{du1} through Theorem \ref{viens}, since the direct search on the small list of candidates in Table \ref{pisot_d8} is much simpler than the large amount of computations required in the proof of Theorem \ref{viens}.    

\section{Beginning of the proof of Theorem~\ref{viens}}\label{proof_viens}

Let $\mu, \nu \in \{-1, 1\}$. Assume that there is a Pisot number $\al$ whose four distinct conjugates 
$\al_1,\al_2,\al_3,\al_4$ satisfy the additive relation
\[
\mu\al_1+\nu\al_2+\al_3+\al_4=0.
\] The choice of parameters $\mu=\nu = -1$ yields the first equation of the theorem $\al_1 + \al_2 = \al_3 + \al_4$. The choices $\mu = -1, \nu = 1$ and $\mu = \nu = 1$ correspond to the equations $\al_1 = \al_2 + \al_3 +\al_4$ and $\al_1 + \al_2 + \al_3 + \al_4 = 0$, respectively.

By applying an automorphism 
$\sigma$ of the Galois group $\gal(\Q(\al)/\Q)$ that sends $\al_1$ to $\al$,
we deduce
$\mu\al=-\nu\sigma(\al_2)-\sigma(\al_3)-\sigma(\al_4)$. Since $|\sigma(\al_j)|<1$ for $j=2,3,4$, this 
yields $\al<3$. 

 This time we will apply Lemma~\ref{sammm} to
 \[
 \be_1:=\mu\al_1, \quad \be_2:=\nu\al_2, \quad \be_3:=\al_3, \quad \be_4:=\al_4 \text{ and } N:=0.
 \] One needs to verify the condition \eqref{patikr}. Observe that the equation 
\[
\be_1^{-1}+\be_2^{-1}+\be_3^{-1}+\be_4^{-1}= \mu\al_1^{-1}+\nu\al_2^{-1}+\al_3^{-1}+\al_4^{-1}=0,
\]
yields
\[
(\mu\al_1+\nu\al_2)\al_3\al_4=-\mu\nu(\al_3+\al_4)\al_1\al_2.
\] 
Clearly, $\al_3+\al_4 \ne 0$, since otherwise $-\al$ is a conjugate of $\al$, which is impossible. Thus, dividing both sides by $-(\al_3+\al_4)=\mu\al_1+\nu\al_2 \ne 0$ we obtain the multiplicative 
relation $\al_3 \al_4=\mu\nu \al_1 \al_2$. Squaring both sides yields $\al_1^2\al_2^2 =\al_3^2\al_4^2$. In view of Lemma~\ref{mignot}, such an identity cannot hold, since the numbers $\al_1^2$, $\al_2^2$, $\al_3^2$ and $\al_4^2$ are distinct conjugates of a Pisot number $\al^2$. 
 Therefore, by Lemma~\ref{sammm}, we 
obtain
\[
4h(\al)=h(\mu\al_1)+h(\nu\al_2)+h(\al_3)+h(\al_4) \geq \frac{1}{2} \log \left(\frac{1+\sqrt{5}}{2}\right).
\]
In view of $h(\pm\al)=\log{M(\al)}/d$ and $M(\al)=\al$, one can rewrite the last inequality as
\begin{equation}\label{bound2}
\frac{4\log{\al}}{d} \geq \frac{1}{2}\log{\left(\frac{1+\sqrt{5}}{2}\right)}.
\end{equation}
Since $1 < \al < 3$, the inequality \eqref{bound2} yields
\begin{equation}\label{d18}
d \le \frac{8 \log 3}{\log \left(\frac{1+\sqrt{5}}{2}\right)}=18.26409\dots.
\end{equation}

Consequently, the degree $d$ of $\al$ can only take the values in the range 
$4 \leq d \leq 18$. We can also rewrite inequalities \eqref{bound2}, \eqref{d18} as
\begin{equation}\label{interv4}
\left(\frac{1+\sqrt{5}}{2}\right)^{d/8} \le  \al < 3, \quad 4 \leq d \leq 18.
\end{equation}

In the remainder of this section we will show that in case $d=4$
the only Pisot number $\al$ whose conjugates satisfy \eqref{m2} 
is precisely the number $(1+\sqrt{3+2\sqrt{5}})/2$. 

Assume that $\al$ is a Pisot number of degree $4$ with conjugates 
$\al=\al_1,\al_2,\al_3,\al_4$ and trace $t=\al_1+\al_2+\al_3+\al_4 \in \Z$. 
From $\al_1+\al_2=\al_3+\al_4$, we see that $\al_1+\al_2=t/2$. Next, from
$\al_1+\al_2=\al+\al_2>0$ and $|\al_1+\al_2|=|\al_3+\al_4|<1+1=2$ we obtain $0<t/2<2$. Hence, $t \in \{1,2,3\}$. Furthermore, $t/2=\al_1+\al_2$
is the sum of two algebraic integers, so $t/2$ is an algebraic integer. 
Thus, the only choice for $t$ is $t=2$. Consequently, 
$\al_1+\al_2=\al_3+\al_4=1$. 

It follows that the minimal polynomial $f(x) \in \Z[x]$ of $\al$ is
\begin{align*}
f(x) &=(x-\al_1)(x-\al_2)(x-\al_3)(x-\al_4) =\\
&= (x^2-x+\al_1\al_2)(x^2-x+\al_3\al_4)=\\
&=(x^2-x+\be_1)(x^2-x+\be_2)=\\
&=(y+\be_1)(y+\be_2) =: g(y),
\end{align*}
where $y:=x^2-x$. Therefore, $\be_1=\al_1\al_2=\al_1(1-\al_1)<0$ and $\be_2=\al_3\al_4$ are real quadratic algebraic integers that are conjugate over $\Q$ (since $f(x)$ is irreducible). Since $\al_1 > 1$, $\al_2 = 1-\al_1$ and from the fact that $\al_1$ is a Pisot number, it follows that $\al_2 \in (-1, 0)$. From Lemma~\ref{duu} $(iii)$, it follows that
$\be_1 \in (-2,0)$. Similarly, as $|\al_3|<1$ and $|\al_4|<1$,  Lemma~\ref{duu} $(i)$ and $(ii)$
 implies that $\be_2$ must be in the interval $(0,1)$. Thus, by Lemma~\ref{duu1}, the quadratic polynomial
$g(-y)=(y-\be_1)(y-\be_2) \in \Z[y]$ must be $y^2+y-1$, which gives
\[
\be_1 = \frac{-1-\sqrt{5}}{2}, \quad \be_2 = \frac{-1+\sqrt{5}}{2}.
\]
Hence, $g(y)=y^2-y-1$. By inserting $y=x^2-x$, we find that 
\[
f(x) =(x^2-x-\be_1)(x^2-x-\be_1)=g(x^2-x)=x^4-2x^3+x-1
\]
with the root $\al=(1+\sqrt{3+2\sqrt{5}})/2$, as claimed. Clearly, the 
roots $\al_1,\al_2,\al_2,\al_4$ of $f(x)$ satisfy
$\al_1+\al_2=\al_3+\al_4=1$.

It remains to show that in cases $5 \le d \le 18$, no four distinct conjugates of a Pisot number of degree $d$ satisfy \eqref{m2}, and there exists no Pisot number $\al$ whose conjugates satisfy $\al_1 = \al_2 + \al_3 + \al_4$ or   $\al_1 + \al_2 + \al_3 + \al_4 = 0$ for $4 \leq d \leq 18$.

\section{Calculations and the end of the proof of Theorem \ref{viens}}\label{calc}

Computationally, the problem is primarily one of finding the minimal polynomials of all Pisot numbers in subintervals of the interval $[1,3]$ of the appropriate degree, and then testing the equations \eqref{m2} and \eqref{lin3}.

To find all Pisot numbers in a given interval up to a given degree, we wrote a fast implementation of the Boyd's algorithm \cite{Boyd78, Boyd84, Boyd85} in the {\tt C} language. Our implementation is based on {\tt FLINT} ({\bf F}ast {\bf Li}brary for {\bf N}umber {\bf T}heory) version 2.4.4 \cite{flint}. The {\tt FLINT} library provides the implementations of the polynomials in $\Z[x]$ and $\Q[x]$ with coefficients of arbitrarily large size and is highly optimized for fast arithmetical operations. 

The initial searches for small degrees (up to $d \leq 17$) were done on a single RedHat Linux  server equipped with two Intel Xeon X5672 series 3.20GHz 12MB Cache 1333MHz 95W CPUs and 96735MB of RAM that was running at the University of Waterloo computing facilities.

First we ran the search to find all the Pisot numbers up to degree $d \leq 8$ in the interval $[1, 2]$. The program found $109$ such Pisot numbers. We processed this list on {\tt Maple} and removed the numbers $\al \not\in (\tau^{d/6}, 2)$ that do not satisfy the inequalities \eqref{interv3}, leaving only $78$ Pisot numbers. Totals from the final list are recorded in 
Table \ref{countss1}. 

\begin{table}[h]\caption{\# of Pisot numbers $\al \in (\tau^{d/6}, 2)$, for $3 \leq d \leq 8$}
\begin{tabular}{ccccccc}
\toprule
$\deg \al$		&	$3$	&	$4$	&	$5$	&	$6$	&	$7$	&	$8$	\\
\midrule
\# of $\al$'s	&	$4$	&	$4$	&	$12$	&	$14$	&	$24$	&	$20$	\\
\bottomrule	
\end{tabular}\label{countss1}
\end{table}

The Pisot numbers for degrees $d=3, 6$ and $9$ satisfying $\alpha \in (\tau^{d/6},2)$ are given in Tables \ref{pisot_d3} \ref{pisot_d6} and \ref{pisot_d8}.
These were manually checked in Section \ref{du_d7} to prove the remaining cases of Theorem \ref{du1}.

To complete the proof of Theorem \ref{viens}, one needs to find all Pisot numbers up to degree $18$ in the interval $(1, 3)$ satisfying \eqref{interv4}. The searches on a single computer were feasible for $d \leq 17$, but for $d=18$, this was no longer practical: incrementing the degree $d$ by $1$ on the same interval resulted in multiplying the search time by a factor in the range $4$ to $4.4$, and doubling the memory usage. The predicted the search time on a single machine would be  up to two CPU months while the RAM usage was predicted to remain under $1$ GB. The single machine search timings are recorded in Table \ref{singleCPU}.

\begin{table}[h]\caption{Single Intel Xeon 3.4GHz machine search timings}
\begin{tabular}{lll}
\toprule
$ \deg{\al}$ & Search interval	& CPU time			\\
\midrule
$10$	&	$[1, 3]$				& $25$ sec.		\\
$11$	&	$[1, 3]$				& $2$ \ min. $13$ sec.\\
$12$	&	$[1, 3]$				& $11$ min. $7$	\\
$13$	&	$[1, 3]$				& $54$ min.	  	\\
$14$	&	$[1, 3]$				& $4$ \ h. $13$ min.	\\
$15$	&	$[1, 3]$				& $18$ h. $47$ min.	\\
$16$	&	$\left[ 610/233, 3\right]$	& $3$ \ days $11$ h.  	\\
$17$	&	$\left[ 367/132, 3\right]$	& $13$ days $17$ h. \\
$18$	&	$\left[ 437/148, 3\right]$	& \textcolor{Gray}{$\leq 60$ days (estimated)}\\
\bottomrule
\end{tabular}\label{singleCPU}
\end{table}

Consequently, we decided to distribute the computations on a large collection of 2 Intel 5272 series 3.4Ghz/6M/1600Mhz 80W Dual Core Xeon Processor machines, allowing up to 120 simultaneous searches to be done. This was achieved by partitioning the search interval $[1,3]$ into 2868 subintervals.
The lengths of these subintervals were balanced with the degree of the 
    polynomials being searched.  
By inequality \eqref{interv4}, the closer a Pisot number $\al$ is to $x=3$ on the real line, the larger are the degrees that must be search for, and in turn, the smaller the subintervals that are searched.
This is necessary because for any fixed interval there are considerably more Pisot numbers of large degree in this interval than Pisot numbers of smaller degrees.
For example, our search found $40\,875$ Pisot numbers of degree 12 in the interval $[2, 3]$, while a much shorter interval between the points $ 1126/405 \approx 2.7802$ and $3$ contains $630\,165$ Pisot numbers of degree $17$.
In practice, the intervals up to degree $d = 12$ were of fixed length $\eps=1/10$. For $d \geq 12$, we used the intervals of length $\eps(d) = 3^{12-d}\cdot 10^{-1}$.

Distributed computations took 1$3.64$ CPU days. In total, $1\,956\,289$ Pisot numbers were found. It should be noted that the number of Pisot polynomials found is somewhat 
higher than actual number of Pisot numbers satisfying the inequality \eqref{interv4}.
This is because we searched over a slightly larger collection of intervals $[a, b]$ with rational endpoints $a, b$ that cover all intervals $(\tau^{d/8}, 3)$, for $4 \leq d \leq 18$. 

After the minimal polynomials were computed, they were sieved by checking if their Pisot roots lie in intervals restricted by the inequalities \eqref{interv4}. All $1\,955\,183$ such Pisot numbers are counted in Table \ref{count222}. 

\begin{table}[h]\caption{\# of Pisot numbers $\al \in (\tau^{\deg{(\al)}/8}, 3)$}
\begin{tabular}{llllll}
\toprule
$\deg{\al}$ & \# of $\al$'s & $\deg{\al}$ & \# of $\al$'s & $\deg{\al}$ & \# of $\al$'s \\
\midrule
$4$ & $43$           & $9$   & $5\,555$  & $14$   & $140\,587$\\
$5$ & $162$         & $10$ & $9\,937$  & $15$ & $273\,851$\\
$6$ & $353$         & $11$ & $23\,410$ & $16$ & $402\,209$\\
$7$ & $1\,075$     & $12$ & $40\,812$ & $17$ & $630\,025$\\
$8$ &  $2\,069$    & $13$ & $85\,979$ & $18$ & $339\,116$\\
\bottomrule   
\end{tabular}\label{count222}
\end{table}
 
 By the result of Kurbatov stated as Lemma \ref{kurbat} in Section \ref{aux}, $d \not\in \{5,7,11,13,17\}$. So the next step of the sieve was to remove those polynomials of prime degree. This reduced the number of eligible Pisot polynomials to $1\,214\,532$.
The third step was to find the minimal polynomials of Pisot numbers whose roots satisfy one of the three numerical inequalities
\[
|\alpha_1 - \alpha_2 - \alpha_3 - \alpha_4 |  < 10^{-5}, \qquad |\alpha_1 + \alpha_2 + \alpha_3 + \alpha_4 |  < 10^{-5},
\] or
\[
|\alpha_1 + \alpha_2 - \alpha_3 - \alpha_4 |  < 10^{-5}.
\]
These calculations were done with 10 digits of accuracy, so this would get all potential Pisot numbers satisfying equation the equations of Theorem \ref{viens}. Some of these \emph{false positive} examples are shown in Table \ref{tab:fake}. 

\begin{table}[h]\caption{False positive examples, where $\alpha_1 + \alpha_2 \approx \alpha_2 + \alpha_4$}
\begin{tabular}{p{3in}c}
\toprule
\ \ \ \ \ \ \ \ \ \ \ \ Pisot polynomial $f(x)$ & $|\alpha_1 + \alpha_2 - \alpha_3 - \alpha_4|$\\
\midrule
$x^{15}-3 x^{14}+x^{13}+x^{12}-2 x^{11}+2 x^{10}-2 x^9+x^8+x^7-2 x^6+2 x^5-2 x^4+x^3+x^2-2 x+1 $ & $ 0.61690 \times 10^{-8} $\\ \\
$x^{15}-2 x^{14}-2 x^{13}-x^{12}-3 x^{11}-3 x^{10}-2 x^8-2 x^7-x^6-x^5-2 x^4-x^3-x^2-x-1 $ & $  0.16262 \times 10^{-7} $\\ \\
$x^{18}-x^{17}-3 x^{16}-5 x^{15}-7 x^{14}-8 x^{13}-7 x^{12}-6 x^{11}-4 x^{10}-2 x^9+x^7+x^6+x^5-x^3-x^2-x-1 $ & $ 0.34922 \times 10^{-7} $\\ \\
$x^{18}-2 x^{17}-2 x^{16}-2 x^{15}-2 x^{14}-x^{13}-2 x^{12}-2 x^{11}-x^{10}-x^9+x^8+x^7+x^6+x^5+x^4+2 x^3+2 x^2+x+1 $ & $ 0.18618 \times 10^{-6} $\\ \\
$x^{16}-3 x^{15}+x^{14}-3 x^{13}+2 x^{12}-2 x^{11}+x^{10}-2 x^9-2 x^7+x^6-2 x^5+2 x^4-3 x^3+x^2-2 x+1 $ & $ 0.19425 \times 10^{-6} $\\ \\
$x^{16}-2 x^{15}-2 x^{14}-2 x^{13}-x^{11}-2 x^{10}-2 x^9-x^8-x^6-2 x^5-x^4-x-1 $ & $ 0.20095 \times 10^{-6} $\\ \\
$x^{16}-3 x^{15}+2 x^{13}-x^{12}-x^{11}+x^{10}+x^9-2 x^8+x^6-x^5+x^2-1 $ & $ 0.21102 \times 10^{-6} $\\ \\
$x^{16}-2 x^{15}-2 x^{14}-2 x^{13}-2 x^{12}-x^{11}-2 x^{10}-x^9+x^4-x-1 $ & $ 0.23696 \times 10^{-6} $\\ \\
$x^{15}-3 x^{14}+x^{11}-x^{10}+x^9-x^8+x^7+x^5-x^4-x^2+x-1 $ & $ 0.29620 \times 10^{-6}$\\
\bottomrule 
\end{tabular}\label{tab:fake}
\end{table}

In fact, this step resulted in a massive reduction of the list of candidates, with only $489$ Pisot numbers surviving: $271$ possible solutions to the equation $\al_1 + \al_2 = \al_3 + \al_4 $, $45$ possible solutions to the equation $\al_1 = \al_2 + \al_3 + \al_4$ and $173$ possible solutions to the equation $\al_1 + \al_2 + \al_3 + \al_4 = 0$.

In the last step, for each of the remaining $489$ minimal polynomials of Pisot numbers (including degree $4$ polynomials), the resultant polynomials $g(x) = \Res_y(f(x-y), f(y))$ and $h(x) = \Res_y(f(x+y), f(y))$ were calculated. The resultant polynomials were tested by checking the conditions described in Lemma \ref{resultant}. In particular, the Pisot polynomials that pass the resultant test given in part $(iii)$ of Lemma \ref{resultant} must have four distinct roots $\al_1,\al_2,\al_3,\al_4$ satisfying $\al_1+\al_2=\al_3+\al_4$. As a result, the only example that was found to pass this test was the original example given in \cite{smydub0}, namely, the polynomial
$
f(x)= x^4-2x^3+x-1.
$
No polynomial passed the tests $(iv)$-$(v)$ of Lemma \ref{resultant}. Thus, the equations $\al_1 = \al_2 + \al_3 + \al_4$ and $\al_1 + \al_2 + \al_3 + \al_4 = 0$ cannot be solved in conjugates of any Pisot number.
Therefore, the proof of Theorem \ref{viens} is completed.

Finally, we remark that all the post-processing was done with {\tt Maple} on the Mac Book air x86\_64 machine. In total, it took $55.72$ CPU hours.

\end{document}